\newtheorem{thm}{Theorem}
\newtheorem{lem}[thm]{Lemma}
\newtheorem{cor}[thm]{Corollary}
\numberwithin{thm}{section}
\theoremstyle{definition}
\newtheorem{mydef}{Definition}
\theoremstyle{definition}
\newtheorem{rem}{Remark}
\theoremstyle{definition}
\newtheorem{conj}{Conjecture}
\newcommand{\cov}[1]{\operatorname{cov}_{#1}}
\def\cL{{\mathcal L}}
\def\cH{{\mathcal H}}
\def\cC{{\mathcal{C}}}
\newcommand{\R}{\mathbb{R}}
\title{Covering half-grids with lines and planes}
\author{Anurag Bishnoi\thanks{Delft Institute of Applied Mathematics, TU Delft.
Email: \texttt{A.Bishnoi@tudelft.nl}} \,and 
Shantanu Nene}
\begin{document}

\maketitle

\begin{abstract}
    We study hyperplane covering problems for finite grid-like structures in $\mathbb{R}^d$.
    We call a set $\mathcal{C}$ of points in $\mathbb{R}^2$ a conical grid if the line $y = a_i$ intersects $\mathcal{C}$ in exactly $i$ points, for some $a_1 > \cdots > a_n \in \mathbb{R}$. 
    We prove that the number of lines required to cover every point of such a grid at least $k$ times is at least $nk\left(1-\frac{1}{e}-O(\frac{1}{n}) \right)$. 
    If the grid $\mathcal{C}$ is obtained by cutting an $m \times n$ grid of points in half along one of the diagonals, then we prove the lower bound of $mk\left(1-e^{-\frac{n}{m}}-O(\frac{n}{m^2})\right)$.
    
    In general, we call a grid obtained by cutting a grid in $\mathbb{R}^d$ along one of the diagonals a \textit{half-grid}.
    Motivated by the Alon-F\"uredi theorem on hyperplane coverings of grids that miss a point and its multiplicity variations, we study the problem of finding the minimum number of hyperplanes required to cover every point of an $n \times \cdots \times n$ half-grid in $\mathbb{R}^d$ at least $k$ times while missing a point $P$.
   For almost all such half-grids, with $P$ being the corner point, we prove asymptotically sharp upper and lower bounds for the covering number in dimensions $2$ and $3$.
   For $k = 1$, $d = 2$, and an arbitrary $P$, we determine this number exactly by using the polynomial method bound for grids.
\end{abstract}

\section{Introduction}
Covering problems for finite subsets of $\mathbb{R}^d$ have a long history and various connections to algebra, topology, and extremal combinatorics. 
For example, the classical Cayley-Bacharach theorem \cite{eisenbud1996cayley} states that if a set $\Gamma$ of $9$ points in $\mathbb{R}^2$ can be obtained as an intersection of two cubic curves then any cubic curve that passes through $8$ out of these $9$ points must also pass through the remaining point. 
In other words, the degree of any polynomial in $\mathbb{R}[x, y]$ that vanishes on all points of $\Gamma$ except one is at least $4$. 
A special case of this theorem is when the two cubic curves are of the form $(x-a_0)(x - a_1)(x - a_2)$ and $(y - b_0)(y - b_1)(y - b_2)$, that is, the set $\Gamma$ is equal to $\{a_0, a_1, a_2\} \times \{b_0, b_1, b_2\}$, and we want to cover all points using lines while missing the point $(a_0, b_0)$. 
The Cayley-Bacharach theorem then implies that any such cover must contain at least $4$ lines.  
Motivated by a Ramsey theoretic problem \cite{K94}, Alon and F\"uredi \cite{AF93} proved a generalization of the above observation for every grid $\Gamma = S_1 \times \cdots \times S_d \in \mathbb{F}^d$, where $\mathbb{F}$ is an arbitrary field and $S_1, \dots, S_d \subseteq \mathbb{F}$ are finite sets.
They showed that the minimum number of hyperplanes required to cover all points of $\Gamma$ while missing one is equal to $\sum_{i = 1}^d (|S_i| - 1)$. 
Note that if we do not have a requirement of missing one point then the minimum is easily seen to be equal to $\min_{i} |S_i|$, which is much smaller. 
We also note that in dimensions greater than $2$, this result of Alon and F\"uredi follows from a more general version of the Cayley-Bacharach theorem that requires modern algebraic geometric tools; see, for example, \cite[Thm. CB7]{eisenbud1996cayley}, \cite[Thm. 5.1]{karasev2019residues} or \cite[Sec.~2.3]{Verlinde24} for a more detailed discussion.

The work of Alon and F\"uredi has played a significant role in the development of the \textit{polynomial method} in combinatorics \cite{Alon99, BGGSZ22}.
Therefore, various generalizations of this result have been studied in the literature. 
One such generalization is the \textit{multiplicity version} where we require the set of hyperplanes to cover each point at least $k$ times, while missing one point \cite{BS09}. 
Motivated by results in finite geometry that precede the Alon-F\"uredi theorem \cite{J77, BS78, B92}, Ball and Serra proved that one needs at least $k(|S_1| - 1) + (|S_2| - 1) + \cdots + (|S_n| - 1)$ hyperplanes, where $S_1$ corresponds to the largest side of the grid. 
Although this bound is tight when $|S_1|$ is much larger than $|S_i|$ for all $i \neq 2$ (see the remark after the proof of Theorem 1.1 in \cite{BBDD23}), in some combinatorially interesting special cases, like the binary grid $\{0, 1\}^n \subset \mathbb{R}^n$, it is an open problem to determine the correct bound \cite{CH20, SW20}. 

Another generalization is to cover other finite sets of points using hyperplanes \cite{BBSz10, basit2023covering}, as done by the Cayley-Bacharach theorem where the points are intersections of arbitrary cubic curves. 
Taking inspiration from this, and in particular the recent work of Basit, Clifton, and Horn on triangular grids \cite{basit2023covering}, we study the covering problem for the following finite subsets of points. 

\begin{mydef}[Conical grids]
    For a positive integer $n$ and real numbers $a_0 < a_1< \cdots < a_{n-1}$, a conical grid on $S = \{a_0, a_1, \dots, a_{n-1}\}$ is a subset $\cC$ of $\R^2$ of size $\frac{n(n+1)}{2}$ such that the line $y=a_i$ contains exactly $n-i$ points of $\cC$. The number $n$ is called the order of $\cC$.
\end{mydef}

\begin{mydef}[Half-grids]
    Let $S_1, S_2$ be two subsets of $\R$ of sizes $n$ and $m$ respectively, where $m \leq n$. Suppose $S_1=\{a_0, a_1, \cdots,  a_{n-1}\}$ and $S_2=\{b_0, b_1, \cdots, b_{m-1}\}$, with $a_0 < a_1 < \cdots < a_{n-1}$ and $b_0 < b_1 < \cdots < b_{m-1}$. 
    Then the half-grid $\mathcal{H}$ defined by $S_1$ and $S_2$ is the set of points $(a_i,b_j)$ such that $(m-1)i+(n-1)j \leq (m-1)(n-1)$. 
    The \textit{vertex} of a half-grid $\cH$ is the point $(a_0,b_0)$. The lines $x=a_0$ and $y=b_0$ together form the \textit{boundary} of $\cH$, and all other points are interior points.
\end{mydef}

Note that an $n \times n$ half-grid is a special kind of conical grid, and it is easy to see, by repeating $k$ times each line of the form $y=a_i$, that all points of a conical grid of order $n$ can be covered $k$ times using $nk$ lines. 
We prove the following general lower bound. 

\begin{thm}
\label{thm:conical_mult}
Let $\Gamma$ be a conical grid of order $n$. The minimum number of lines required to cover every point in $\Gamma$ at least $k$ times is at least $nk\left(1-\frac{1}{e}-O(\frac{1}{n}) \right)$.
\end{thm}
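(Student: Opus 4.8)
The plan is to run a double-counting argument that separates the covering family into horizontal lines and all the rest. Let $\mathcal{L}$ be any family of lines covering every point of $\Gamma$ at least $k$ times, and set $L := |\mathcal{L}|$. A horizontal line $y = c$ covers no point of $\Gamma$ unless $c = a_i$ for some $i$, so I may assume every horizontal line of $\mathcal{L}$ is one of the $n$ lines $y = a_0, \dots, y = a_{n-1}$. Let $t$ be the number of non-horizontal lines in $\mathcal{L}$ (counted with multiplicity); for $j \in \{1, \dots, n\}$ let $R_j$ denote the row of $\Gamma$ containing exactly $j$ points (that is, the line $y = a_{n-j}$), and let $h_j \ge 0$ be the multiplicity of $R_j$ in $\mathcal{L}$. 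Then $L = t + \sum_{j=1}^n h_j$.

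The only geometric input is that a non-horizontal line meets each row $R_j$ in at most one point. Fix $j$: the $h_j$ copies of $R_j$ contribute exactly $h_j$ to the covering multiplicity of every point of $R_j$, so each of the $j$ points of $R_j$ lies on at least $k - h_j$ non-horizontal lines of $\mathcal{L}$. Counting incidences between the $j$ points of $R_j$ and the non-horizontal lines, each of which contributes at most one such incidence, gives $t \ge j(k - h_j)$, hence $h_j \ge \max(0,\, k - t/j)$ (trivially true also when $h_j \ge k$). Summing over $j$,
\[
 L \;\ge\; t + \sum_{j=1}^n \max\!\Big(0,\; k - \tfrac{t}{j}\Big) \;=:\; f(t).
\]

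It then remains to show $f(t) \ge nk\bigl(1 - \tfrac1e - O(\tfrac1n)\bigr)$ for every $t \ge 0$. For $t \ge nk$ all summands vanish and $f(t) = t \ge nk$. For $0 \le t \le nk$, set $a = \lfloor t/k \rfloor$; the $j$-th summand equals $k - t/j$ exactly for $a < j \le n$, so, with $H_m := \sum_{i=1}^m \tfrac1i$,
\[
 f(t) \;=\; t + k(n-a) - t\,(H_n - H_a) \;=\; nk - t\,(H_n - H_a) + (t - ak) \;\ge\; nk - (a+1)k\,(H_n - H_a),
\]
using $t - ak \ge 0$ and $t \le (a+1)k$. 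For $a \ge 1$ we have $H_n - H_a \le \ln(n/a)$, and a short optimization shows $x \mapsto (x+1)\ln(n/x)$ on $[1,n]$ is maximized near $x = n/e$ with value $n/e + O(1)$; for $a = 0$, $(a+1)(H_n - H_a) = H_n = O(\log n)$. Hence $(a+1)(H_n - H_a) \le n/e + O(1)$ uniformly in $a$, so $f(t) \ge nk - k\bigl(n/e + O(1)\bigr) = nk(1 - \tfrac1e) - O(k)$, which gives $L \ge nk\bigl(1 - \tfrac1e - O(\tfrac1n)\bigr)$.

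The conceptual step — recognizing that the relevant parameter is $t$, the number of non-horizontal lines, and distilling the per-row inequality $t \ge j(k - h_j)$ — is where the argument is won; everything afterwards is routine. The only place that needs mild care is the last estimate: one must observe that $(a+1)(H_n - H_a)$ is largest in the range $a = \Theta(n)$, where $\ln(n/a) = O(1)$, so that the naive bound $(a+1)\ln(n/a) \le a\ln(n/a) + \ln n \le n/e + \ln n$ can be tightened to $n/e + O(1)$; directly optimizing $(x+1)\ln(n/x)$ achieves this. Note that no property of $\Gamma$ beyond its row sizes is used, so the bound holds for every conical grid of order $n$; and taking $t \approx nk/e$ with $h_j = \lceil k - t/j\rceil$ shows this counting bound is essentially tight.
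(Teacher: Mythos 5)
Your proof is correct and follows essentially the same route as the paper: the per-row inequality $h_j \ge k - t/j$, obtained by counting incidences of the non-horizontal lines with each row, is exactly the paper's key lemma, and the subsequent optimization over the number $t$ of non-horizontal lines is the same calculation. The only cosmetic difference is in the final estimate, where you bound $(a+1)(H_n-H_a) \le (a+1)\ln(n/a) \le n/e + O(1)$ directly, while the paper splits into two cases and optimizes using Euler--Mascheroni-type bounds on the harmonic numbers; both yield the stated lower bound.
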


For the $m \times n$ half-grids, we prove the following lower bound which is equal to the previous bound when $m = n$. 

\begin{thm}
\label{thm:half_mult}
    Let $\cH$ be an $m \times n$ half-grid, with $2 \leq m \leq n$. Then the minimum number of lines required to cover every point of $\cH$ at least $k$ times is at least $mk\left(1-e^{-\frac{n}{m}}-O(\frac{n}{m^2})\right)$.
\end{thm}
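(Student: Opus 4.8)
The plan is to prove the bound by a weighting (linear-programming duality) argument of the same flavour as the one behind Theorem~\ref{thm:conical_mult}. Assume $m\ge 2$ (the case $m=1$ is immediate). Write $\cH$ as the disjoint union of its rows $R_0,\dots,R_{m-1}$, where $R_j=\{(a_i,b_j)\in\cH\}$ is the set of points of $\cH$ on the line $y=b_j$. From the defining inequality $(m-1)i+(n-1)j\le(m-1)(n-1)$ one reads off $|R_j|=r_j:=\lfloor (n-1)(1-\tfrac{j}{m-1})\rfloor+1$, so that $(n-1)\tfrac{m-1-j}{m-1}<r_j\le (n-1)\tfrac{m-1-j}{m-1}+1$ and $n=r_0>r_1>\cdots>r_{m-1}=1$.

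Next I would fix the weights. Let $t$ be the largest integer with $\sum_{j=0}^{t-1}\tfrac1{r_j}\le 1$ (so $1\le t\le m-1$), put $w_P=1/r_j$ for $P\in R_j$ with $j<t$, and put $w_P=0$ for $P\in R_j$ with $j\ge t$. The key feasibility claim is that $\sum_{P\in\ell}w_P\le 1$ for every line $\ell$: if $\ell$ is the horizontal line $y=b_j$ then $\sum_{P\in\ell}w_P$ equals $r_j\cdot\tfrac1{r_j}=1$ (for $j<t$) or $0$ (for $j\ge t$), and any horizontal line $y=c$ with $c\notin\{b_0,\dots,b_{m-1}\}$ meets $\cH$ nowhere; if $\ell$ is non-horizontal then it meets every line $y=b_j$ in exactly one point, so $|\ell\cap R_j|\le 1$ and therefore $\sum_{P\in\ell}w_P\le\sum_{j=0}^{t-1}\tfrac1{r_j}\le 1$.

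A double-counting step then handles the combinatorial part: if $\cL$ is a multiset of lines covering every point of $\cH$ at least $k$ times, then the feasibility claim (first inequality) and the covering hypothesis (second inequality) give
\[
|\cL|\;\ge\;\sum_{\ell\in\cL}\ \sum_{P\in\ell\cap\cH}w_P\;=\;\sum_{P\in\cH}w_P\,\bigl|\{\ell\in\cL:P\in\ell\}\bigr|\;\ge\;k\sum_{P\in\cH}w_P\;=\;k\sum_{j=0}^{t-1}r_j\cdot\tfrac1{r_j}\;=\;kt ,
\]
so everything reduces to showing $t\ge m\bigl(1-e^{-n/m}-O(n/m^2)\bigr)$.

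For that final step I would use $r_j>(n-1)\tfrac{m-1-j}{m-1}$ to estimate, for an integer $1\le s\le m-1$,
\[
\sum_{j=0}^{s-1}\frac1{r_j}\;<\;\frac{m-1}{n-1}\sum_{i=m-s}^{m-1}\frac1i\;\le\;\frac{m-1}{n-1}\left(\frac1{m-s}+\ln\frac{m-1}{m-s}\right),
\]
and choose $s$ maximal so that the right-hand side is at most $1$; solving this essentially logarithmic inequality gives $m-s=(1+o(1))(m-1)e^{-(n-1)/(m-1)}$, whence $t\ge s\ge m(1-e^{-n/m})-O(n/m)$, the precise shape of the error coming from $\tfrac{n-1}{m-1}-\tfrac nm=\tfrac{n-m}{m(m-1)}=O(n/m^2)$. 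I expect this estimate to be the only genuine obstacle: one has to absorb the floor in the definition of $r_j$, the lower-order $\tfrac1{m-s}$ term, and the regime where $n/m$ is large (so $m-t$ is as small as $1$) — all elementary, but all requiring some care. As a sanity check, setting $m=n$ reproduces the bound of Theorem~\ref{thm:conical_mult} for square half-grids.
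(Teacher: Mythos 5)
Your proof is correct, and it reaches the paper's bound by a route that is dual to, rather than identical with, the one in the paper. The paper proves a counting lemma: writing $l_0$ for the number of non-horizontal lines in a cover and using that each such line meets each row at most once, it deduces that the row of size $\beta_i$ needs at least $k-l_0/\beta_i$ horizontal lines, sums this over the large rows, and then optimizes the resulting expression over $l_0$ via a two-case analysis (according to the sign of the coefficient of $l_0$), which is where the harmonic-sum and $e^{-n/m}$ estimates enter. You instead exhibit an explicit feasible solution to the dual LP --- weight $1/r_j$ on each point of the $t$ largest rows, with $t$ maximal subject to $\sum_{j<t}1/r_j\le 1$ --- and conclude $|\cL|\ge kt$ by weak duality. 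The two arguments rest on the same structural fact (a non-horizontal line meets every row in at most one point) and reduce to the same threshold computation $\sum 1/r_j\approx 1$, but your formulation buys a cleaner endgame: the bound $kt$ is a single fixed quantity, so no case analysis or optimization over $l_0$ is needed, and the only remaining work is the harmonic estimate for $t$, which you sketch correctly (the edge cases you flag --- the $\tfrac{1}{m-s}$ term and the regime $t=m-1$ --- do get absorbed into the $O(n/m^2)$ error, using $n\ge m$). Your $t$ also coincides, up to reindexing, with the quantity the paper's optimization implicitly produces, and setting $m=n$ recovers Theorem~\ref{thm:conical_mult} as you note.
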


As a side note, any time $O()$ or $\Theta()$ notation is used in this paper, it indicates constants independent of all parameters in the expression. For instance, $a \leq b+O(c)$ implies that there exists a constant $M$, independent of $b$ and $c$, such that $a \leq b+Mc$ for all $b,c$.

In \cite{basit2023covering}, Basit, Clifton, and Horn have found a lower bound for $k$-covering of $n \times n$ \textit{equally spaced} half-grids (which they called triangular grids), that comes out to be $\frac{2nk}{3}-O(k)$.
Their bound relied on the particular structure of the half-grid, whereas our slightly weaker bounds apply to a much more general arrangement of points.

We now turn to the case of missing one point, as done by Alon and F\"uredi. 
Here, the answer depends on the point that is being missed and the structure of the grids. 
Using the result of Alon-F\"uredi for grids, we prove the following for half-rectangular grids, which are defined in Section~\ref{sec:structured}.

\begin{thm}
\label{thm:structured}
    For an $m \times n$ half-rectangular grid $\cH$, with $2 \leq m \leq n$, and a point $P=(x_0,y_0) \in \cH$, the minimum number of lines required to cover each point of $\cH$, while missing $P$, is equal to $n-\lceil \frac{n-m}{m-1} y_0 \rceil -1$.
\end{thm}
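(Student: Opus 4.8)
The plan is to prove matching lower and upper bounds, both built around the axis‑parallel sub‑box of $\cH$ through $P$ that is pushed as far to the right as possible; the hypothesis $m \le n$ is exactly what makes "right" better than "up". Write $P = (x_0,y_0)$, so $x_0 \in \{0,\dots,n-1\}$, $y_0 \in \{0,\dots,m-1\}$, and set $I := (n-1) - \lceil \tfrac{(n-1)y_0}{m-1}\rceil = \lfloor (n-1) - \tfrac{(n-1)y_0}{m-1}\rfloor$. Since $P \in \cH$ we have $(m-1)x_0 + (n-1)y_0 \le (m-1)(n-1)$, i.e. $x_0 \le (n-1) - \tfrac{(n-1)y_0}{m-1}$, and as $x_0$ is an integer this gives $x_0 \le I$; the same inequality with $I$ in place of $x_0$ shows the box $B := \{0,\dots,I\}\times\{0,\dots,y_0\}$ lies in $\cH$. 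A one‑line ceiling computation ($\lceil \tfrac{(n-1)y_0}{m-1}\rceil - y_0 = \lceil \tfrac{(n-m)y_0}{m-1}\rceil$) gives $I + y_0 = n - \lceil \tfrac{n-m}{m-1}y_0\rceil - 1$, so it suffices to show the covering number equals $I+y_0$.

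For the lower bound I would simply observe that any family $\cL$ of lines covering $\cH \setminus \{P\}$ in particular covers $B \setminus \{P\}$, and $B$ is an $(I+1)\times(y_0+1)$ grid containing $P$; hence by the Alon--F\"uredi theorem (for grids, as quoted above) $|\cL| \ge (I+1-1) + (y_0+1-1) = I + y_0$. (The degenerate case $y_0 = 0$ is just the statement that $n-1$ of the $n$ collinear points of the bottom row need $n-1$ lines.)

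For the upper bound the plan is to split $\cH$ at $P$ into three regions and cover each by its natural family of lines. Take the $x_0$ vertical lines $x=i$ for $0\le i < x_0$; the $y_0$ horizontal lines $y=j$ for $0 \le j < y_0$; and, for $1 \le c \le I - x_0$, the anti‑diagonal $\ell_c$ through the collinear points $(x_0+s,\;y_0+c-s)$, $0 \le s \le c$ (these lie on the line $x+y = x_0+y_0+c$ — this is where "half‑\emph{rectangular}" grid is used, so that the diagonals really are lines). That is $x_0 + y_0 + (I-x_0) = I + y_0$ lines. None passes through $P$: the verticals and horizontals miss it by the index ranges, and on $\ell_c$ one has $i+j = x_0+y_0+c > x_0+y_0$. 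To check coverage, take $(i,j) \in \cH$ with $(i,j)\neq(x_0,y_0)$: if $i<x_0$ it is on a vertical line, if $j<y_0$ on a horizontal line, and otherwise set $i' = i - x_0 \ge 0$, $j' = j - y_0 \ge 0$, not both zero; then the defining inequality of $\cH$ and $m\le n$ give $(m-1)(i'+j') \le (m-1)i' + (n-1)j' \le (m-1)(n-1) - (m-1)x_0 - (n-1)y_0$, so $i'+j' \le (n-1) - x_0 - \tfrac{(n-1)y_0}{m-1}$, hence $i'+j' \le I - x_0$ (the left side is an integer), and $(i,j)$ lies on $\ell_{i'+j'}$.

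The main obstacle, such as it is, lies in (i) correctly identifying the sub‑box $B$ — pushing it rightward rather than upward, which is precisely what $m\le n$ permits — and (ii) verifying that the three line families in the upper bound fit together with no slack: that the anti‑diagonals in the "upper‑right corner" region run exactly over $c = 1,\dots,I-x_0$, which again hinges on $m\le n$ through the inequality $(m-1)(i'+j')\le (m-1)i'+(n-1)j'$. Once the index bookkeeping and the ceiling identity are pinned down, both directions are short; I would still spell out the edge cases $x_0 = I$ (no anti‑diagonals, and then every point of $\cH$ is below or left of $P$, or is $P$) and $y_0 = m-1$ (forcing $I = 0$ and $x_0 = 0$), which are immediate.
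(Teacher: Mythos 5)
Your proposal is correct and follows essentially the same route as the paper: the lower bound embeds the same $(n-\lceil\frac{(n-1)y_0}{m-1}\rceil)\times(y_0+1)$ sub-grid through $P$ and applies the Alon--F\"uredi bound, and the upper bound uses the identical family of $x_0$ verticals, $y_0$ horizontals, and anti-diagonals $x+y=x_0+y_0+c$. The only difference is notational (your parameter $I$ and the explicit ceiling identity), and your coverage check is the same inequality the paper uses.
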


We now move to covering with multiplicity. 
For simplicity, we assume that the point that is missed is the vertex of the half-grid, that is, the point where the horizontal and the vertical axes meet. 
In this scenario, the minimum number of hyperplanes required to cover the equally spaced $n \times \dots \times n$ grid $\Gamma = \{(x_1, \dots, x_n) : \forall i, x_i \in \mathbb{Z}^{\geq 0}, \sum x_i \leq n-1\}$, while missing the origin, 
is seen to be $(n-1)k$. The lower bound follows because no two of the $n-1$ non-vertex points along the same axis can be covered by the same hyperplane, and for an upper bound, consider each of the hyperplanes $\sum x_i = j$, for $1 \leq j \leq n-1$ repeated $k$ times.
Thus, we focus on the case of a `generic' half-grid (see Section~\ref{sec:missing_point} for a definition), where such a small cover is impossible and determining the correct bound is highly nontrivial. 
Using linear programming methods, we are able to prove the following asymptotically tight bounds in dimensions $2$ and $3$. 

\begin{thm}
\label{thm:generic_dim2}
    For a generic $n \times n$ half-grid in $\mathbb{R}^2$, the minimum number of lines required to cover every point at least $k$ times while missing the vertex is at least
    $\frac{3nk}{2} - 2k$ and at most $\frac{3nk}{2} + \frac{k}{2}$.
\end{thm}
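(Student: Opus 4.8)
The plan is to prove the lower bound by a weighting (LP-duality) argument and the upper bound by an explicit construction. I use the genericity hypothesis only in the following form: every line of $\R^2$ meeting $\cH$ in at least three points is one of the axis-parallel lines $H_j := \{y = b_j\}$ (with $n-j$ points, $0 \le j \le n-1$) or $V_i := \{x = a_i\}$ (with $n-i$ points, $0 \le i \le n-1$), so every other line meets $\cH$ in at most two points; in particular the only valid (vertex-avoiding) lines carrying three or more points of $\cH$ are the $H_j$ and $V_i$ with $j, i \ge 1$, since $H_0$ and $V_0$ pass through the vertex $(a_0, b_0)$.

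\textbf{Lower bound.} Let $L_1, \dots, L_N$ be a multiset of valid lines covering each point of $\cH$ other than the vertex at least $k$ times. I assign a weight $y_p \ge 0$ to each $p \in \cH$ so that $\sum_{p \in L} y_p \le 1$ for every valid line $L$; then, since every point in the support of $y$ has multiplicity at least $k$,
\[
N \;=\; \sum_{r=1}^{N} 1 \;\ge\; \sum_{r=1}^{N}\sum_{p \in L_r} y_p \;=\; \sum_{p} y_p \cdot (\text{multiplicity of } p) \;\ge\; k \sum_p y_p .
\]
Take $y_p = \tfrac12$ for every $p$ on $H_0$ or $V_0$ except the vertex (where $y = 0$), $y_p = \tfrac12$ for each of the $n-2$ interior points $(a_{n-1-j}, b_j)$ with $1 \le j \le n-2$, and $y_p = 0$ otherwise. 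Feasibility is immediate: any line with at most two points of $\cH$ has weight-sum at most $2 \cdot \tfrac12 = 1$; $H_0$ and $V_0$ impose no constraint since they are forbidden; and each $H_j$ or $V_i$ with $i, j \ge 1$ contains at most two weighted points (its unique boundary point together with at most one of the chosen interior points), hence weight-sum at most $1$. Counting, there are $2(n-1)$ weighted boundary points and $n-2$ weighted interior points, so $\sum_p y_p = \tfrac12\big(2(n-1) + (n-2)\big) = \tfrac{3n-4}{2}$, giving $N \ge \tfrac{3nk}{2} - 2k$.

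\textbf{Upper bound.} Set $s = \lceil (n-1)/2 \rceil$ and $t = \lfloor (n-1)/2 \rfloor$, so $s + t = n-1$ and $s \ge t$. Use each of $V_1, \dots, V_s$ and each of $H_1, \dots, H_t$ exactly $k$ times; this covers $k$ times every point $(a_i, b_j)$ with $1 \le i \le s$ or $1 \le j \le t$. The remaining uncovered points (apart from the vertex) are the $s$ points $(a_0, b_j)$ with $t < j \le n-1$ on $V_0$ and the $t$ points $(a_i, b_0)$ with $s < i \le n-1$ on $H_0$; there are no leftover interior points, since $i > s$ and $j > t$ would force $i + j \ge n+1 > n-1$. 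Now pair off the $t$ bottom-row points with $t$ distinct left-column points, and for each pair use the line through the two points $k$ times — such a line avoids the vertex because it meets $x = a_0$ at height $b_j \neq b_0$ — and cover each of the remaining $s - t$ left-column points $k$ times by an arbitrary vertex-avoiding line through it. The total is $k(s+t) + kt + k(s-t) = k(n - 1 + s)$, which equals $\tfrac{3nk}{2} - \tfrac{3k}{2}$ for $n$ odd and $\tfrac{3nk}{2} - k$ for $n$ even; in either case it is at most $\tfrac{3nk}{2} + \tfrac{k}{2}$.

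\textbf{Main obstacle.} The crux is the lower bound, and within it the choice of weighting: to land exactly on $\tfrac{3n-4}{2}$ rather than merely $\tfrac{3n}{2} - O(1)$, one must combine full weight $\tfrac12$ on both \emph{unconstrained} boundary lines with a \emph{maximum} partial permutation matrix on the interior (the anti-diagonal), arranged so that every constrained row and column still sees only two half-weights. Once this configuration is found, checking feasibility and performing the count are routine, as is the upper-bound construction.
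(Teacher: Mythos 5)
Your proof is correct and follows essentially the same route as the paper: the lower bound uses the identical dual weighting (weight $k/2$ on the two boundary lines minus the vertex plus the anti-diagonal points $(a_{n-1-j},b_j)$, feasible by genericity), and the upper bound uses the same construction of axis-parallel lines followed by pairing the leftover boundary points. Your slightly more careful choice of $s=\lceil (n-1)/2\rceil$, $t=\lfloor (n-1)/2\rfloor$ even gives a marginally better upper bound ($\tfrac{3nk}{2}-k$ or $\tfrac{3nk}{2}-\tfrac{3k}{2}$) than the stated $\tfrac{3nk}{2}+\tfrac{k}{2}$.
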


   \begin{thm}
   \label{thm:generic_dim3}
    For any 3-dimensional generic half-grid $\Gamma$ of order $n+1$, 
    \[
\frac{31}{18}nk-O(k)
\leq
\cov{k}(\Gamma)
\leq
\frac{31}{18}nk+O(n^2+k),
\]
\end{thm}

\begin{rem}
    Note that for $1 \ll n \ll k$, both upper and lower bounds converge to $31nk/18$.
    If $n \gg k$, then the lower bound is negative, which is trivially true.
\end{rem}

\section{Covering grids with multiplicity}
In this section, we prove Theorems~\ref{thm:conical_mult} and \ref{thm:half_mult}. 

\begin{lem}
    Let $\alpha_1>\alpha_2> \cdots >\alpha_n$ be a sequence of real numbers and $\beta_1 \leq \beta_2 \leq \cdots \leq \beta_n$ a sequence of positive integers. Let $S$ be a set of $\beta_1+\beta_2+\cdots +\beta_n$ points in $\mathbb{R}^2$ such that the line $y=\alpha_i$ contains exactly $\beta_i$ points of $S$. Let
    $\mathcal{C}$ be a multiset of lines that covers every point of $S$ at least $k$ times. Say $\mathcal{C}$ has $l_0$ non-horizontal lines, and let $t$ be a positive integer such that $\beta_t \geq \frac{l_0}{k}$, then the size of $\mathcal{C}$ is at least
    $$(n-t+1)k+l_0 \left(1-\sum_{i=t}^n \frac{1}{\beta_i} \right).$$
\end{lem}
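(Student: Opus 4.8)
The plan is to prove this by a direct double-counting on the horizontal rows. Split $\mathcal{C}$ into the multiset $\mathcal{C}_h$ of its horizontal lines and the set $\mathcal{C}_v$ of its $l_0$ non-horizontal lines, and for $1 \le i \le n$ let $h_i$ denote the number of times the line $y = \alpha_i$ occurs in $\mathcal{C}_h$. Then $\card{\mathcal{C}} = l_0 + \sum_{i=1}^n h_i$, so the whole task is to lower bound $\sum_i h_i$, and the key point will be a per-row inequality relating $h_i$ to $l_0$ and $\beta_i$.

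Fix a row $i$, i.e.\ the set of $\beta_i$ points of $S$ lying on $y = \alpha_i$, and consider the total coverage $\sum_P c(P)$ of these points, where $c(P)$ is the number of lines of $\mathcal{C}$ (with multiplicity) through $P$. Since $\mathcal{C}$ covers every point at least $k$ times, this total is at least $\beta_i k$. On the other hand, the only horizontal line meeting row $i$ is $y = \alpha_i$ itself, so the horizontal lines contribute exactly $\beta_i h_i$ to this total; and each non-horizontal line of $\mathcal{C}_v$ meets the line $y = \alpha_i$ in exactly one point of $\R^2$, hence covers at most one point of $S$ in that row, so the non-horizontal lines contribute at most $l_0$. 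This gives $\beta_i h_i + l_0 \ge \beta_i k$, i.e.\ $h_i \ge k - l_0/\beta_i$ for every $i$.

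Finally I would sum these bounds, but only over the indices where they are useful. Because $\beta_1 \le \cdots \le \beta_n$ and $\beta_t \ge l_0/k$, we have $\beta_i \ge l_0/k$ for all $i \ge t$, so $h_i \ge k - l_0/\beta_i \ge 0$ is a genuine lower bound for those $i$; for $i < t$ we simply use $h_i \ge 0$. Hence
\[
\card{\mathcal{C}} = l_0 + \sum_{i=1}^n h_i \ge l_0 + \sum_{i=t}^n \left(k - \frac{l_0}{\beta_i}\right) = (n-t+1)k + l_0\left(1 - \sum_{i=t}^n \frac{1}{\beta_i}\right),
\]
which is exactly the claimed bound.

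I do not expect a real obstacle here: the entire content is the per-row inequality $\beta_i h_i + l_0 \ge \beta_i k$. The only things to be careful about are that a non-horizontal line contributes at most $1$ to the coverage of a single row (which is precisely why the horizontal lines must be separated out first), and that the summation should be restricted to $i \ge t$, where the hypothesis $\beta_t \ge l_0/k$ together with the monotonicity of $(\beta_i)$ guarantees the per-row estimate is not negative. The decreasing hypothesis on the $\alpha_i$ is irrelevant beyond labelling the rows; only the monotonicity of the $\beta_i$ is used.
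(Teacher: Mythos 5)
Your proposal is correct and is essentially the same argument as the paper's: bound the multiplicity-coverage of each row $y=\alpha_i$ by $\beta_i h_i + l_0$, deduce $h_i \ge k - l_0/\beta_i$, and sum over $i \ge t$ while using $h_i \ge 0$ for $i < t$. The only difference is that you spell out why the non-horizontal lines contribute at most $l_0$ per row and why the summation is restricted to $i \ge t$, details the paper leaves implicit.
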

\begin{proof}
    Suppose we use $l_i$ lines of the form $y=\alpha_i$. Then the total number of points covered on $y=\alpha_i$ (with multiplicity) is at most $l_0+\beta_il_i$, and this must be at least $k\beta_i$. Hence $l_i \geq k- \frac{l_0}{\beta_i}$ for all $i$. 
    After deleting horizontal lines that contain no point of $S$, we may assume
that every horizontal line in $\mathcal C$ is one of the lines
$y=\alpha_i$. Thus the number of lines used is
    $$l_0+\sum_{i=1}^n l_i \geq l_0 + \sum_{i = t}^n l_i \geq (n-t+1)k+l_0 \left(1-\sum_{i=t}^n \frac{1}{\beta_i} \right),$$
    as required.
\end{proof}

We apply the above lemma to conical and half-grids.

\begin{cor}
    Let $\Gamma$ be a conical grid of order $n$. The minimum number of lines required to cover every point in $\Gamma$ at least $k$ times is at least $nk\left(1-e^{\frac{1}{2n}-1}-\frac{1}{n}\right) = nk\left(1-\frac{1}{e}-O(\frac{1}{n}) \right)$.
\end{cor}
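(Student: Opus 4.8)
The plan is to apply the lemma above to the conical grid $\Gamma$, whose data matches the lemma with $\beta_i = i$ for $1 \le i \le n$: reindex the rows so that the row with a single point comes first, so that the line $y = \alpha_i$ carries $\beta_i = i$ points and $\beta_1 + \cdots + \beta_n = n(n+1)/2$, as required. Let $\mathcal{C}$ be a multiset of lines covering every point of $\Gamma$ at least $k$ times, and let $l_0$ be the number of its non-horizontal lines (horizontal lines missing every row contribute nothing and may be deleted). Two boundary cases are immediate: if $l_0 = 0$ then every point of row $i$ lies only on $y = \alpha_i$, forcing $k$ copies of that line for each $i$ and hence $|\mathcal{C}| \ge nk$; and if $l_0 \ge nk$ then $|\mathcal{C}| \ge l_0 \ge nk$. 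In either case the claimed bound is clear, so assume $1 \le l_0 < nk$. Then I feed the lemma the integer $t = \lceil l_0/k \rceil$, which lies in $\{1,\dots,n\}$ and satisfies $\beta_t = t \ge l_0/k$, obtaining
$$|\mathcal{C}| \;\ge\; (n - t + 1)k + l_0\Bigl(1 - \sum_{i=t}^n \tfrac1i\Bigr).$$

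The next step is to eliminate the two free parameters $l_0$ and $t$. Writing $l_0 = tk - \delta$ with $0 \le \delta < k$ and expanding, the right-hand side equals $(n+1)k - tk\sum_{i=t}^n \tfrac1i - \delta\bigl(1 - \sum_{i=t}^n \tfrac1i\bigr)$, and since $\sum_{i=t}^n 1/i \ge 0$ we have $\delta\bigl(1 - \sum_{i=t}^n 1/i\bigr) \le \delta < k$, so
$$|\mathcal{C}| \;\ge\; nk - tk\sum_{i=t}^n \tfrac1i .$$
It then suffices to bound $t\sum_{i=t}^n \tfrac1i$ from above, uniformly in $t \in \{1,\dots,n\}$. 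Comparing the sum with an integral gives $\sum_{i=t}^n \tfrac1i \le \tfrac1t + \int_t^n \tfrac{dx}{x} = \tfrac1t + \ln\tfrac nt$, hence $t\sum_{i=t}^n \tfrac1i \le 1 + t\ln\tfrac nt$; and since $\tfrac{\ln u}{u} \le \tfrac1e$ for every $u > 0$, putting $u = n/t$ gives $t\ln(n/t) \le n/e$. Therefore $|\mathcal{C}| \ge nk - k(1 + n/e) = nk\bigl(1 - \tfrac1e - \tfrac1n\bigr)$, which is at least $nk\bigl(1 - e^{1/(2n)-1} - \tfrac1n\bigr)$ because $e^{-1} \le e^{1/(2n)-1}$; the identity $e^{1/(2n)-1} = \tfrac1e + O(\tfrac1n)$ then yields the displayed simplification.

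I expect the only delicate point to be the bookkeeping in the second paragraph: one has to choose the harmonic-sum estimate (and, to recover the exact constant $e^{1/(2n)-1}$ rather than the cruder $1 - \tfrac1e - \tfrac1n$, a sharper midpoint bound such as $\sum_{i=t}^n 1/i \le \ln\tfrac{2n+1}{2t-1}$) carefully enough that the leftover terms really do collapse into the claimed error. Once the problem is reduced to maximizing $t\ln(n/t)$, the constant $1 - 1/e$ is forced: the extremal situation has $l_0 \approx nk/e$, i.e. about an $e^{-1}$ fraction of the lines in non-horizontal directions, the remaining ones spent on the surviving rows.
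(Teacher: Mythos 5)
Your proof is correct, and it follows the same skeleton as the paper's: apply the lemma with $\beta_i=i$ and $t=\lceil l_0/k\rceil$, then optimize over $l_0$. The difference is in the endgame. The paper splits into two cases according to the sign of the coefficient of $l_0$, substitutes $l_0\le sk$ or $l_0>sk-k$ accordingly, and then minimizes $s(H_n-\ln s-\gamma)$ by calculus, using $H_s>\ln s+\gamma$ and $H_n<\ln n+\gamma+\frac{1}{2n}$ to land exactly on the constant $e^{\frac{1}{2n}-1}$. You instead write $l_0=tk-\delta$ with $0\le\delta<k$, observe that the $\delta$-term is uniformly less than $k$, and reduce everything to the elementary inequality $\frac{\ln u}{u}\le\frac{1}{e}$; this avoids the case split and the Euler--Mascheroni bookkeeping entirely, and yields $nk\left(1-\frac{1}{e}-\frac{1}{n}\right)$, which dominates the stated bound since $e^{-1}\le e^{\frac{1}{2n}-1}$. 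You also handle the boundary cases $l_0=0$ and $l_0\ge nk$ explicitly, which the paper's argument silently assumes away (its expression $\ln s$ is undefined when $s=0$). So your route is a cleaner and slightly stronger version of the same argument; the parenthetical about needing a midpoint estimate to recover the exact constant $e^{\frac{1}{2n}-1}$ is unnecessary, since your bound already implies it.
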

\begin{proof}
    Suppose the conical grid is on the set $\{a_0<a_1< \cdots < a_{n-1} \}$. Then, using the notation of the lemma, $\alpha_i = a_{n-i}$. Suppose we use $l_i$ lines of the form $y = \alpha_i$ and $l_0$ non-horizontal lines. 
    In this case, $\beta_i=i$, and so the smallest $s$ such that $s=\beta_s \geq \frac{l_0}{k}$ is $s=\lceil \frac{l_0}{k} \rceil$. 
    If $s=0$, then $l_0=0$, and the lemma with $t=1$ gives a lower bound
of $nk$, which is stronger than the desired bound. Thus we may assume
$s\geq 1$.
    We use $t=s+1$ in the lemma. If $t>n$, that would imply $l_0>(n-1)k$ so we are already done. So we assume $t \leq n$ so $\beta_t$ is well defined.
    Writing $l=l_0$, the total number of lines required is at least 
    $$ l+(n-s)k-l\left(\frac{1}{s+1}+\cdots+\frac{1}{n}\right) = (n-s)k+l-l(H_n-H_{s})>(n-s)k+l-l(H_n-\ln s-\gamma)$$
    where $\gamma$ is the Euler-Mascheroni constant and $H_j$ denotes the $j$-th partial sum of the harmonic series. In the last step, we have used the bound $H_{s}>\ln s +\gamma$. Now we have two cases: 
    
    \textbf{Case I:} $1+ \gamma + \ln s - H_n \leq 0$. In this case, we use $l \leq sk$ to deduce that the number of lines is $>nk-sk(H_n-\ln s-\gamma)$. 
    The minimum of the above expression occurs, after taking the derivative, when $s=e^{H_n-\gamma-1}$. At this point, the expression becomes $k(n-s)$, and using the inequality $H_n < \ln n + \gamma + \frac{1}{2n}$, we find that the number of lines needed is greater than $nk(1-e^{\frac{1}{2n}-1})$.

    \textbf{Case II:} $1+ \gamma + \ln s - H_n > 0$. In this case, we use the bound $l > sk-k$ to get that the number of lines is $>nk-sk(H_n-\ln s-\gamma)-k(1+\gamma + \ln s - H_n)>nk-sk(H_n-\ln s-\gamma)-k$, since $s \leq n$ implies $H_n \geq H_s > \ln s +\gamma$. This is just $k$ less than the bound in the previous case, so in this case we get a lower bound of $nk\left(1-e^{\frac{1}{2n}-1}-\frac{1}{n}\right)$.
\end{proof}

\begin{cor}
    Let $\cH$ be an $m \times n$ half-grid, with $m \leq n$. Then the minimum number of lines required to cover every point of $\cH$ at least $k$ times is at least $mk\left(1-e^{-\frac{n}{m}}-O(\frac{n}{m^2})\right)$.
\end{cor}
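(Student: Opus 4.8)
The plan is to rerun the argument of the preceding corollary, with the ratio $c := \frac{n-1}{m-1} \ge 1$ replacing the constant $1$ that was implicit there; we may assume $m \ge 2$, since for $m = 1$ the claimed bound is vacuous once the hidden constant is taken large enough.

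First I would unpack the definition of the half-grid to read off its row sizes: sorting the horizontal lines $y = b_j$ by increasing number of points of $\cH$, the $i$-th one contains exactly $\beta_i := \lfloor c(i-1) \rfloor + 1$ of them, for $i = 1, \dots, m$. In particular $\beta_1 = 1$, $\beta_m = n$, the sequence $(\beta_i)$ is strictly increasing, and $c(i-1) < \beta_i \le c(i-1) + 1$ for all $i$. Given a set $\cC$ of lines covering $\cH$ at least $k$ times, write $l$ for its number of non-horizontal lines. If $l > kn$ then $|\cC| > kn \ge km$ and we are done, so let $s$ be the smallest integer in $\{1, \dots, m\}$ with $\beta_s \ge l/k$. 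Applying the Lemma to the $m$ rows with $t = s$, peeling off the $i=s$ summand via $\beta_s \ge l/k$, and bounding the tail by $\sum_{i=s+1}^m \tfrac1{\beta_i} < \tfrac1c\sum_{j=s}^{m-1}\tfrac1j = \tfrac1c(H_{m-1}-H_{s-1})$ using $\beta_i > c(i-1)$, I obtain
$$|\cC| \ \ge\ (m-s)k + l\left(1 - \tfrac1c(H_{m-1}-H_{s-1})\right).$$
The case $s = 1$ (i.e.\ $l \le k$) I would treat separately and directly: then each horizontal-line count obeys $l_i \ge k - l/\beta_i \ge k(1-1/\beta_i)$, so $|\cC| \ge mk - k\sum_i 1/\beta_i \ge mk - k - \tfrac kc H_{m-1}$, which a short estimate shows exceeds $mk(1-e^{-c}) - O(nk/m)$. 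Hence from now on $s \ge 2$ and $k\beta_{s-1} < l \le k\beta_s$.

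The heart of the argument is to turn the displayed inequality into a clean one-variable bound. Using $\ln j + \gamma < H_j < \ln j + \gamma + \tfrac1{2j}$ (and a comparison of the tail sum with an integral), $\tfrac1c(H_{m-1}-H_{s-1})$ equals $\tfrac1c\ln\tfrac ms$ up to an additive $\tfrac1{cs}$, while by $c(i-1) < \beta_i \le c(i-1)+1$ the quantity $l$ is sandwiched between $kc(s-2)$ and $k(c(s-1)+1)$, both within $O(ck)$ of $cks$. Splitting on the sign of the coefficient $g(s) := 1 - \tfrac1c\ln\tfrac ms - \tfrac1{cs}$ of $l$, taking $l > k\beta_{s-1}$ when $g(s) \ge 0$ and $l \le k\beta_s$ when $g(s) < 0$, and collecting every additive error (each is $O(k)$ or $O(ck) = O(nk/m)$, using $1 \le c \le 2n/m$ for $m \ge 2$), I expect to reach
$$|\cC| \ \ge\ \Psi(s) - O(nk/m), \qquad \Psi(s) := mk - sk\left(1 - c + \ln\tfrac ms\right),$$
for every $s$ that can arise. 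Now $\Psi''(s) = k/s > 0$, so $\Psi$ is convex on $[1,m]$: its interior critical point is $s = m e^{-c}$ with $\Psi(me^{-c}) = mk(1-e^{-c})$, while $\Psi(m) = mkc \ge mk$ and $\Psi(1) = mk(1-e^{-c}) + k\bigl(me^{-c} + c - 1 - \ln m\bigr)$, whose bracket is $\ge 0$ for all $c \ge 1$ (as a function of $c$ it is minimized at $c = \ln m$, where it vanishes). Therefore $\Psi(s) \ge mk(1-e^{-c})$ throughout, so $|\cC| \ge mk(1-e^{-c}) - O(nk/m)$. Finally, $c = \tfrac{n-1}{m-1} \ge \tfrac nm$ gives $e^{-c} \le e^{-n/m}$, hence $mk(1-e^{-c}) \ge mk(1-e^{-n/m})$, and since $O(nk/m) = mk\cdot O(n/m^2)$ this is precisely the claimed $mk\bigl(1 - e^{-n/m} - O(n/m^2)\bigr)$.

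The one genuinely delicate step is the passage to $|\cC| \ge \Psi(s) - O(nk/m)$. Unlike the conical case, where $\beta_i = i$ exactly, the floor and the $m \ne n$ scaling cost an extra additive term for each $s$, and for small $s$ the coefficient $g(s)$ can be as negative as $-\Theta(\log m / c)$ — so the inequality is not true term by term in an obvious way. The point is that for such small $s$ the value $\Psi(s)$ is itself close to $mk$, and one must check in each sign case that the harmful terms are absorbed by the surplus $\Psi(s) - mk(1-e^{-c}) \ge 0$, which is comfortably large away from the balanced value $s \approx me^{-c}$. Once this cancellation is pinned down, what remains is routine harmonic-sum bookkeeping, exactly paralleling the previous corollary.
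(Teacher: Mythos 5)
Your proposal is correct and takes essentially the same route as the paper: apply the Lemma with $\beta_i = \lfloor c(i-1)\rfloor + 1$, reduce to a one-variable function of $s$ via harmonic-sum estimates and a case split on the sign of the coefficient of $l$, and optimize (your explicit convexity analysis of $\Psi$ just fills in what the paper dispatches with ``similar to the previous corollary''). The one step you flag as delicate in fact closes without invoking the surplus $\Psi(s)-mk(1-e^{-c})$: when the coefficient $A$ of $l$ is negative you substitute $l \le k\beta_s \le kc(s-1)+k \le kcs$ (using $c\ge 1$), which only increases the bound, and when $A\ge 0$ the substitution $l > kc(s-2)$ costs at most $2kcA \le 2kc$, so the total additive error is genuinely $O(kc+k)=O(nk/m)$.
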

\begin{proof}
    Note that in the case of $n=m$, $\cH$ is a conical grid, and the bound reduces to the same one as the previous case. Hence from now on, we assume $n \geq m+1$. 
    
    Let the sets of $\cH$ be $S_1=\{c_0<c_1<\cdots<c_{n-1}\}$ and $S_2=\{d_0<d_1<\cdots<d_{m-1}\}$. Then, using the notation of the lemma, $\alpha_i=d_{m-i}$ for $1 \leq i \leq m$, and 
    $$\beta_i=1+\left\lfloor\frac{(i-1)(n-1)}{m-1}\right\rfloor > \frac{(i-1)(n-1)}{m-1}.$$ 
    Let $l=l_0$ for convenience. If $l \geq mk$, then we are already done, so we may assume $l < mk$.
 
    Define $s = \lceil \frac{l(m-1)}{k(n-1)} \rceil$ and $t = 1+s$. 
    Since $l < mk$, we have $s \leq \lceil \frac{m(m-1)}{n-1} \rceil \leq m-1$, so $t \leq m$ and $\beta_t$ is well-defined.
    Moreover, $\frac{(t-1)(n-1)}{m-1} = \frac{s(n-1)}{m-1} \geq \frac{l}{k}$, so the condition $\beta_t \geq l/k$ of the lemma is satisfied.
 
    By the lemma, the number of lines used is at least
    $$l+(m-s)k - l\left(\frac{1}{\beta_{s+1}}+\cdots + \frac{1}{\beta_m}\right).$$
    We first handle the case $s=0$. This can only happen if $l=0$, and in that case the above bound becomes $mk$, so we are done. \\
    
    Since $\beta_i > \frac{(i-1)(n-1)}{m-1}$, we have $\frac{1}{\beta_i} < \frac{m-1}{(i-1)(n-1)}$, giving
    \begin{align}
    l+(m-s)k-l\left(\frac{1}{\beta_{s+1}}+\cdots + \frac{1}{\beta_m}\right) & > l+(m-s)k-\frac{l(m-1)}{n-1} \left( \frac{1}{s}+\frac{1}{s+1}+\cdots+\frac{1}{m-1}\right) \notag\\
    &= (m-s)k+l\left(1-\frac{(m-1)(H_{m-1}-H_{s-1})}{n-1}\right). \label{eq:main_expr}
    \end{align}

    Next suppose $s=1$. Then \eqref{eq:main_expr} gives the lower bound
    \[
    (m-1)k+
    l\left(1-\frac{(m-1)H_{m-1}}{n-1}\right).
    \]
    If the coefficient of $l$ is non-negative, this is at least
    $(m-1)k=mk(1-1/m)$, which is enough since $n\geq m$ and hence
    $1/m=O(n/m^2)$.

    It remains to consider the case where the coefficient of $l$ is negative.
    Then $n-1<(m-1)H_{m-1}$. Since $s=1$, we have
    $l\leq k(n-1)/(m-1)$, and therefore
    \begin{align*}
    (m-1)k+
    l\left(1-\frac{(m-1)H_{m-1}}{n-1}\right)
    &\geq
    (m-1)k+
    \frac{k(n-1)}{m-1}
    \left(1-\frac{(m-1)H_{m-1}}{n-1}\right)  \\
    &=
    (m-1)k+\frac{k(n-1)}{m-1}-kH_{m-1}
    \geq
    mk-kH_{m-1}.
    \end{align*}
    We now use the elementary estimate
    $H_{m-1}/m\leq e^{-n/m}+O(n/m^2)$. Indeed, if
    $n/m\leq \frac12\log m$, then $e^{-n/m}\geq m^{-1/2}$ dominates
    $H_{m-1}/m=O(\log m/m)$; if $n/m>\frac12\log m$, then
    $n/m^2\gg \log m/m$. Hence
    \[
    mk-kH_{m-1}
    =
    mk\left(1-\frac{H_{m-1}}{m}\right)
    \geq
    mk\left(1-e^{-n/m}-O\left(\frac{n}{m^2}\right)\right).
    \]
    
        For the remainder of the proof, assume $s\geq 2$. Put
    $A=\frac{n-1}{m-1}$ and
    \[
    D(s)=H_{m-1}-\ln(s-1)-\gamma.
    \]
    Since $H_{s-1}>\ln(s-1)+\gamma$, \eqref{eq:main_expr} is greater than
    \begin{equation}\label{eq:f_bound}
    (m-s)k+l\left(1-\frac{D(s)}{A}\right).
    \end{equation}
    We repeatedly use
    \[
    \frac{m-1}{m}e^{\frac{1}{2(m-1)}-\frac{n-1}{m-1}}
    =
    e^{-n/m+O(n/m^2)},
    \]
    and the fact that $O(k)$ is absorbed into $mk\cdot O(n/m^2)$ since
    $n\geq m$.

    \textbf{Case I:} $D(s)\leq A$. The coefficient of $l$ in
    \eqref{eq:f_bound} is non-negative, so using
    $l>\frac{(s-1)k(n-1)}{m-1}=(s-1)kA$, the expression is greater than
    \[
    f(s):=(m-s)k+(s-1)k(A-D(s)).
    \]
    Since $f'(s)=k(A-D(s))\geq 0$ throughout this case, $f$ is minimized at
    the left boundary $s_0=1+e^{H_{m-1}-\gamma-A}$, where $D(s_0)=A$. Hence
    \[
    f(s)\geq f(s_0)=mk-s_0k.
    \]
    Using $H_{m-1}<\ln(m-1)+\gamma+\frac{1}{2(m-1)}$, we get
    \[
    s_0<1+(m-1)e^{\frac{1}{2(m-1)}-\frac{n-1}{m-1}}.
    \]
    Therefore
    \[
    f(s)
    \geq
    mk-k-k(m-1)e^{\frac{1}{2(m-1)}-\frac{n-1}{m-1}}
    \geq
    mk\left(1-e^{-n/m}-O\left(\frac{n}{m^2}\right)\right).
    \]

    \textbf{Case II:} $D(s)>A$. The coefficient of $l$ in
    \eqref{eq:f_bound} is negative, so using
    $l\leq \frac{sk(n-1)}{m-1}=skA$, the expression is at least
    \[
    h(s):=(m-s)k+sk(A-D(s))
    =
    mk+sk(A-1-D(s)).
    \]
    We have $h''(s)=\frac{k(s-2)}{(s-1)^2}\geq 0$, so it suffices to check
    the possible boundary points and the possible interior critical point.

    If $s^*$ is an interior critical point, then
    $D(s^*)=A+\frac{1}{s^*-1}$, and hence
    \[
    h(s^*)=
    mk-\frac{(s^*)^2k}{s^*-1}
    =
    mk-k(s^*-1)-2k-\frac{k}{s^*-1}.
    \]
    Since
    \[
    s^*-1<e^{H_{m-1}-\gamma-A}
    <
    (m-1)e^{\frac{1}{2(m-1)}-\frac{n-1}{m-1}},
    \]
    we get
    \[
    h(s^*)\geq
    mk-k(m-1)e^{\frac{1}{2(m-1)}-\frac{n-1}{m-1}}-O(k)
    \geq
    mk\left(1-e^{-n/m}-O\left(\frac{n}{m^2}\right)\right).
    \]

    At the boundary $s=s_0$, if it occurs, we have $D(s_0)=A$, so
    $h(s_0)=mk-s_0k$. The same estimate as in Case~I gives
    \[
    h(s_0)\geq
    mk\left(1-e^{-n/m}-O\left(\frac{n}{m^2}\right)\right).
    \]

    At the endpoint $s=2$, if it occurs, then
    $A\leq H_{m-1}-\gamma=O(\log m)$, so $n/m=O(\log m)$. Also
    \[
    h(2)=
    mk+
    2k\left(\frac{n-m}{m-1}-(H_{m-1}-\gamma)\right)
    \geq
    mk-O(k\log m).
    \]
    Since, in this regime,
    $\log m/m\leq e^{-n/m}+O(n/m^2)$, this is enough.

    Finally, at the endpoint $s=m-1$, if it occurs, then
    \[
    h(m-1)
    =
    nk-(m-1)k\left(H_{m-1}-\ln(m-2)-\gamma\right)
    =
    nk-O(k).
    \]
    This is at least $mk-O(k)$, and hence again at least
    \[
    mk\left(1-e^{-n/m}-O\left(\frac{n}{m^2}\right)\right).
    \]

    Thus in every case the number of lines is at least
    \[
    mk\left(1-e^{-n/m}-O\left(\frac{n}{m^2}\right)\right),
    \]
    as required.
\end{proof}

Note that the above bound is non-trivial only in the regime $n=o(m^2)$. In fact, when $n-1\geq (m-1)H_{m-1}$, we get a much stronger bound:

\begin{cor}
    Let $\cH$ be an $m \times n$ half-grid, with
    \[
    n-1 \geq (m-1)H_{m-1}.
    \]
    Then the minimum number of lines required to cover every point of $\cH$
    at least $k$ times is at least $(m-2)k$.
\end{cor}

\begin{proof}
    We follow the proof of the previous corollary up to
    \eqref{eq:main_expr}. If $m\leq 2$, the bound is trivial, so assume
    $m\geq 3$.

    By assumption,
    \[
    \frac{n-1}{m-1}\geq H_{m-1}.
    \]
    Hence, for every $s\geq 1$,
    \[
    1-\frac{(m-1)(H_{m-1}-H_{s-1})}{n-1}\geq 0.
    \]
    Thus the coefficient of $l$ in \eqref{eq:main_expr} is always
    non-negative.

    If $s=0$, the bound from the previous proof is $mk$, which is stronger.
    If $s=1$, \eqref{eq:main_expr} gives at least $(m-1)k$, which is again
    stronger than $(m-2)k$.

    Now assume $s\geq 2$. As in the previous proof, using
    $H_{s-1}>\ln(s-1)+\gamma$ gives \eqref{eq:f_bound}. Since
    \[
    H_{m-1}-\ln(s-1)-\gamma < H_{m-1}\leq \frac{n-1}{m-1},
    \]
    the coefficient of $l$ in \eqref{eq:f_bound} is also non-negative.
    Therefore, using
    \[
    l>\frac{(s-1)k(n-1)}{m-1},
    \]
    the expression in \eqref{eq:f_bound} is greater than
    \[
    f(s):=(m-s)k+
    \frac{(s-1)k(n-1)}{m-1}
    \left(
    1-\frac{(m-1)(H_{m-1}-\ln(s-1)-\gamma)}{n-1}
    \right).
    \]
    As before,
    \[
    f'(s)=
    k\left(
    \frac{n-1}{m-1}
    -
    (H_{m-1}-\ln(s-1)-\gamma)
    \right)>0
    \]
    for all $s\geq 2$. Hence $f(s)$ is minimized at $s=2$. Thus
    \[
    f(s)\geq f(2)
    =
    (m-2)k+
    k\frac{n-1}{m-1}
    \left(
    1-\frac{(m-1)(H_{m-1}-\gamma)}{n-1}
    \right)
    \geq
    (m-2)k,
    \]
    where the last inequality follows again from
    $\frac{n-1}{m-1}\geq H_{m-1}$. This proves the claim.
\end{proof}
We do not believe that the lower bound in the conical grid case is optimal.  In the equally
spaced case, that is, for triangular grids, the fractional lower bound of
Basit, Clifton, and Horn gives a lower bound of the form
\[
\frac{2nk}{3}-O(k).
\]
Moreover, for fixed $n$ and $k\to\infty$, the integral covering number is
asymptotic to the corresponding fractional covering number. This suggests
that the same lower bound should hold for arbitrary conical grids.

\begin{conj}
    There exists an absolute constant $C>0$ such that the following holds.
    Let $\Gamma$ be a conical grid of order $n$. Then every multiset of lines
    that covers every point of $\Gamma$ at least $k$ times has size at least
    \[
    \frac{2nk}{3}-Ck.
    \]
\end{conj}

\section{Covering with multiplicity without one point}
\label{sec:missing_point}

Here we study problems of covering half-grids while missing one of the points.
If we cover an $n \times n \times \cdots \times n$ equally-spaced half-grid, then the optimal answer is $k(n-1)$. 
Thus, we focus on arbitrary half-grids and prove Theorems~\ref{thm:generic_dim2} and \ref{thm:generic_dim3}.

\subsection{Generic grids in dimension $2$}

\begin{mydef}
    For a half-grid $\cH$ and a point $P \in \cH$, define $\cov{k}(\cH,P)$ to be the minimum number of lines such that every point in $\cH \setminus P$ lies on at least $k$ of the lines and none of the lines pass through $P$. We also use the notation $\cov{k}(\cH)=\cov{k}(\cH,V)$ where $V$ is the vertex of $\cH$.
\end{mydef}

We study the covering number of \textit{generic half-grids}, as defined below, which are similar to, but slightly stronger than the generic grids introduced in \cite{BBDD23}. 

\begin{mydef}
    A half-grid $\cH$ on $S_1$ and $S_2$ is called generic if every line that is not parallel to the $x$-axis or $y$-axis passes through at most $2$ points of $\cH$.
\end{mydef}

\begin{rem}
    It can be easily verified that if the sets $S_1$ and $S_2$ are picked randomly, say from the uniform distribution on the interval $[0, 1]$, then with probability $1$ the half-grid will have the property above and thus justify the use of the term `generic'. 
\end{rem}

We will be using the linear programming method to find lower bounds for coverings, as done in \cite{BBDD23}. A covering can be thought of as an integer linear program (ILP) to minimize some function, and so the solution to it would be lower-bounded by the solution to the corresponding real LP. 
Furthermore, by duality, this solution would be lower-bounded by any solution to the dual LP. The dual LP for the covering problem would be to give weights to points so that the sum of the weights of points on any line is at most $k$ (for $k$-times covering), and we produce lower bounds by giving such weights.
To be more precise, 
$\cov{k}(\Gamma)$, for a half-grid $\Gamma$ in $\mathbb{R}^2$ is the solution to the following ILP, where $\mathcal{L}$ is the set of lines that intersect $\Gamma$ in at least two points:

\begin{align*}
    &\text{minimize} \quad \sum\limits_{\ell\in \cL} z(\ell)\\
    &\text{subject to}\notag\\
    &\quad\quad\quad\quad\sum\limits_{\substack{\ell\in \cL:\\ (x,y)\in \ell}} z(\ell) \geq k \quad \text{for all }(x,y)\in \Gamma \setminus \{(0,0)\} \\
    &\quad\quad\quad\quad z(\ell) \in \mathbb{Z}_{\geq 0} \quad \text{for all }\ell\in \cL
\end{align*}

\noindent
The dual of the corresponding LP-relaxation of the ILP is given by
\begin{align*}
    &\text{maximize} \quad \sum\limits_{(x,y)\in \Gamma \setminus \{(0,0)\}} w(x,y)\\
    &\text{subject to}\notag\\
    &\quad\quad\quad\quad \sum\limits_{\substack{(x,y)\in \Gamma \setminus \{(0,0)\}:\\ (x,y)\in \ell}} w(x,y) \leq k \quad \text{for all }\ell\in \mathcal{L}\notag\\
    &\quad\quad\quad\quad w(x,y) \geq 0 \quad \text{for all }(x,y)\in \Gamma \setminus \{(0,0)\} \notag
\end{align*}

Therefore, we just need to find a feasible solution to this dual linear program to get a lower bound on the primal.
In particular, any assignment of real weights $w(x, y)$ with the constraints above, gives us \[\cov{k}(\Gamma) \geq \sum_{(x, y) \in \Gamma \setminus \{(0, 0)\}} w(x, y).\]
We refer the reader to \cite[Section 2]{BBDD23} for a more detailed discussion on this approach. 

We now prove lower and upper bounds on the covering number of generic square half-grids in $\mathbb{R}^2$ that determine this function asymptotically for fixed $n \geq 4$ and $k \rightarrow \infty$.

\begin{thm}
    For a generic half-grid $\cH$ of order $n \geq 4$, $$\frac{3nk}{2}+\frac{k}{2} \geq \cov{k}(\cH) \geq \frac{3nk}{2}- 2k.$$
\end{thm}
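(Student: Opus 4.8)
For the upper bound $\covv_k(\cH) \le \tfrac{3nk}{2} + \tfrac{k}{2}$, the plan is to exhibit an explicit cover. The natural building blocks are the $n-1$ horizontal lines $y = b_j$ for $j \geq 1$ and the $n-1$ vertical lines $x = a_i$ for $i \geq 1$ — none of which pass through the vertex $(a_0,b_0)$ — and the $n-1$ "anti-diagonal" lines through pairs $(a_i, b_0)$ and $(a_0, b_j)$ type configurations, but more precisely the lines connecting boundary points on the two axes that still miss the vertex. A cleaner approach: each interior point $(a_i,b_j)$ with $i,j \geq 1$ gets covered once by $x = a_i$ and once by $y = b_j$; boundary points on $x = a_0$ (resp. $y = b_0$) other than the vertex get covered by their horizontal (resp. vertical) lines. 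This handles one layer of covering with about $2(n-1)$ lines but over-covers; to reach the $\tfrac{3}{2}n$ rate one instead mixes: use each horizontal line and each vertical line $\lceil k/2\rceil$ or so times, plus a set of roughly $\tfrac{n}{2}k$ diagonal-type lines to patch the deficit on the boundary. I would work out the exact repetition counts so that the vertex-avoiding constraint is respected and every non-vertex point reaches multiplicity $k$, and then the total is $2(n-1)\cdot\tfrac{k}{2} + \Theta(nk/2) \le \tfrac{3nk}{2} + \tfrac{k}{2}$; the constant $\tfrac{k}{2}$ will come from rounding $k/2$ up and from a single extra group of lines needed to fix the two boundary axes.

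For the lower bound $\covv_k(\cH) \ge \tfrac{3nk}{2} - 2k$, the plan is to produce a feasible solution of the dual LP, i.e. an assignment of nonnegative weights $w(x,y)$ to the points of $\cH \setminus \{(a_0,b_0)\}$ such that every line meeting $\cH$ in $\ge 2$ points has total weight $\le k$, and then show $\sum w(x,y)$ is close to $\tfrac{3nk}{2}$. Because the grid is generic, every non-axis-parallel line carries at most two points, so the only genuinely constraining lines are the $n-1$ horizontal lines $y = b_j$, the $n-1$ vertical lines $x = a_i$, and the two-point lines (each of which just says $w(p) + w(q) \le k$). A symmetric choice is to put weight $k/\ell$ on each point of a line containing $\ell$ points — but the horizontal line $y=b_j$ has $n-j$ points and the vertical line $x=a_i$ has a complementary number, and a point $(a_i,b_j)$ lies on both, so one wants $w(a_i,b_j)$ bounded by both $k/(\#\text{points on its row})$ and $k/(\#\text{points on its column})$ while also satisfying $w(p)+w(q)\le k$ on every diagonal. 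Setting $w$ roughly equal to $k/2$ on a carefully chosen set of about $3n$ points (a constant number per "level"), or more likely a graded assignment $w(a_i,b_j) = \min$ of a few linear expressions, should give total weight $\approx \tfrac{3nk}{2}$; the $-2k$ slack absorbs boundary effects near the vertex and near the far corners where the row/column counts are small. I would first solve this dual LP exactly for small $n$ (say $n = 4, 5$) to guess the optimal weight pattern, then verify the pattern in general.

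The main obstacle I expect is the lower bound — specifically, checking feasibility of the candidate dual weights against \emph{all} two-point lines simultaneously. There are $\Theta(n^2)$ such diagonal lines and their constraints interact with the graded row/column weights in a way that is sensitive to the genericity hypothesis; the weight pattern that is optimal for the horizontal/vertical constraints may violate $w(p) + w(q) \le k$ for some diagonal pair of "heavy" points, forcing a reduction of weights and hence a worse bound. Getting the constant exactly $-2k$ (rather than, say, $-Cnk$) requires the diagonal constraints to be automatically satisfied except near the boundary, which I anticipate will need a clever choice of weights that are small (order $k/j$) on most points and only reach order $k/2$ on an $O(1)$-per-level set of points that are pairwise non-diagonal. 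A secondary obstacle is ensuring the upper-bound cover genuinely avoids the vertex while still achieving multiplicity $k$ on the two boundary axes with only $O(k)$ extra lines beyond the $\tfrac{3nk}{2}$ main term; the $n \geq 4$ hypothesis is presumably what makes the patching lines fit.
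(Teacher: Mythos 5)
Your framework is the right one---an explicit construction for the upper bound and a feasible solution of the dual LP for the lower bound---but the proposal stops short of the one idea that makes the lower bound work, and that idea is much simpler than the difficulties you anticipate. The weighting is: give weight $\frac{k}{2}$ to every non-vertex point of the two boundary lines $x=a_0$ and $y=b_0$, and also to the anti-diagonal points $(a_i,b_{n-1-i})$ for $1\le i\le n-2$; give weight $0$ to everything else. The structural fact your sketch never exploits is that the boundary lines pass through the vertex $V$, so they are not admissible covering lines and impose \emph{no} constraint---this is why each of them may carry total weight about $\frac{(n-1)k}{2}$. Every admissible horizontal line $y=b_j$ (with $j\ge 1$) meets the weighted set in exactly two points, namely $(a_0,b_j)$ and the anti-diagonal point $(a_{n-1-j},b_j)$, for total weight exactly $k$; similarly for vertical lines; and by genericity every non-axis-parallel line contains at most two grid points, hence weight at most $k$. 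Thus the $\Theta(n^2)$ two-point-line constraints you flag as the main obstacle are satisfied automatically, no graded or $\min$-type weights are needed, and the total weight is $(3n-4)\cdot\frac{k}{2}=\frac{3nk}{2}-2k$ on the nose. Your alternative suggestion of spreading weight $k/(\text{row length})$ along rows and columns points away from this and would not reach $\frac{3nk}{2}$; as stated, the lower bound in your proposal is a plan to search for a weighting, not a proof.

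On the upper bound, your plan (all $2(n-1)$ axis-parallel lines about $\lceil k/2\rceil$ times each, plus about $\frac{nk}{2}$ two-point lines pairing the uncovered axis points) is workable in spirit, but the rounding for odd $k$ costs an extra $(n-1)\cdot\frac{1}{2}$ lines, which exceeds the permitted slack $\frac{k}{2}$ once $n>4k+1$; so the constant $+\frac{k}{2}$ would not come out as you describe. The paper instead repeats a single multiplicity-one cover $k$ times: take the lines $x=a_i$ and $y=b_i$ for $1\le i\le\lceil n/2\rceil$ (every interior point $(a_i,b_j)$ satisfies $i+j\le n-1$, hence $\min(i,j)\le\lceil n/2\rceil$, so is already hit), pair the remaining $O(n)$ axis points with $\lfloor n/2\rfloor$ further two-point lines avoiding $V$, and repeat the whole family $k$ times, giving $k\left(2\lceil n/2\rceil+\lfloor n/2\rfloor\right)\le\frac{3nk}{2}+\frac{k}{2}$ with no parity issues.
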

\begin{proof}
Let $\mathcal{H}$ be given by the points $a_0, a_1, \dots, a_{n -1}$ on the $x$-axis and the points $b_0, b_1, \dots, b_{n - 1}$ on the $y$-axis, with $V = (a_0, b_0)$ as its vertex.
 We first prove the upper bound via a construction.
Choose $\lceil \frac{n}{2} \rceil \leq \frac{n+1}{2}$ lines of the form $y=b_i$ for $1 \leq i \leq \lceil \frac{n}{2} \rceil$ and $\lceil \frac{n}{2} \rceil \leq \frac{n+1}{2}$ lines of the form $x=a_i$ for $1 \leq i \leq \lceil \frac{n}{2} \rceil$. These cover all points of $\cH \setminus V$ except the $2\lfloor \frac{n}{2} \rfloor-2$ points given by $(a_i, b_0)$ and $(a_0, b_i)$ with $i > \lceil n/2 \rceil$. 
We pair these points up and cover them with $\lfloor\frac{n}{2} \rfloor-1$ more lines. Now we repeat all of these lines $k$ times. 
We have used at most $k(2\lceil \frac{n}{2} \rceil+\lfloor \frac{n}{2} \rfloor) \leq \frac{3nk}{2}+\frac{k}{2}$ lines.

For the lower bound, 
we use linear programming. In particular, we consider the dual of the linear relaxation of the covering problem (see \cite[Section 2]{BBDD23}): we have to give weights to every point except $V$ such that the sum of the weights on any line not passing through $V$ is at most $k$, and we have to maximize the sum of the weights. Any such valid weighting will give a lower bound for our primal LP, which in turn would be a lower bound for the number of lines. 

    Give weight $\frac{k}{2}$ to points on lines $x=a_0$ and $y=b_0$, and also to points of the form $(a_i,b_{n-1-i})$ for $1 \leq i \leq n - 2$. Give all other points a weight of $0$. 
    This is a valid weighting due to the grid being generic; indeed, any line passes through at most two points that have non-zero weight, and these weights sum to $k$. 
    The total sum of the weights is $\frac{3nk}{2}-2k$, as required. 
\end{proof}

\begin{rem}
    Note that we get the same asymptotic bounds for the generic full grid (see \cite[Theorem 1.5]{BBDD23}), but the construction and the weighting are completely different. 
\end{rem}

\begin{rem}
    The only place where we use genericity of the grid is the fact that no three points on the entire boundary  $(a_i,b_{n-i})$, $x=a_0$, or $y=b_0$ are collinear, and thus our lower bound holds for any half-grid satisfying this weaker property.
\end{rem}

\subsection{Generic grids in dimension $3$}

We extend the definition of generic to higher-dimensional grids as follows. A grid $S_1 \times S_2 \times S_3$ in $\mathbb{R}^3$ is considered generic if every non-empty planar slice, obtained by fixing one coordinate and allowing the others to vary, forms a generic grid in $\mathbb{R}^2$ \footnote{Here we mean that any line not parallel to the axes should intersect the grid in at most $2$ points.}, and any plane not parallel to any axis contain at most $3$ points of the grid. 
Generic half-grids can be obtained by cutting such a generic grid in half. 
We formalize this in the following definition, where $\mathrm{supp}(\vec{u})$ denotes the set of coordinate positions where $\vec{u}$ has a nonzero entry.
Here we only focus on the case where all $S_i$'s have the same size.

\begin{mydef}[Generic half-grid in 3 dimensions]
    Let $S_1=\{0=a_0<a_1<\cdots<a_n\}$, $S_2=\{0=b_0<b_1<\cdots<b_n\}$, $S_3=\{0=c_0<c_1<\cdots<c_n\}$ be three sets of real numbers. Let $\Gamma=\{(a_i,b_j,c_k) \mid i+j+k \leq n\}$. Then $\Gamma$ is called a 3-dimensional half-grid of order $n+1$. 
    $\Gamma$ is called generic if for any plane $\vec{u}^T \cdot \vec{x} = 1$, with $\mathrm{supp}(\vec{u}) = I \subseteq [3]$, and any scalars $\lambda_j$, with $j \in [3] \setminus I$, we have 
    \[\left| (x_1, x_2, x_3) \in \Gamma : \sum u_i x_i = 1, x_j = \lambda_j~\forall j \in [3] \setminus I \right| \leq |I|.\]
\end{mydef}

\begin{mydef}
    For a 3-dimensional generic half-grid $\Gamma$, define $\cov{k}(\Gamma)$ to be the minimum number of planes such that every point in $\Gamma \setminus \{(0,0,0)\}$ lies on at least $k$ of the planes and none of the planes pass through the origin. 
\end{mydef}

\begin{thm}
    For any 3-dimensional generic half-grid $\Gamma$ of order $n+1$, 
    \[
\frac{31}{18}nk-O(k)
\leq
\cov{k}(\Gamma)
\leq
\frac{31}{18}nk+O(n^2+k),
\]
\end{thm}
\begin{proof}
    We give a construction covering the half-grid $k$ times using $\frac{31}{18}nk + O(n^2+k)$ planes. 
    First consider the following collection of planes:
    \begin{enumerate}
        \item\label{type 1} For each integer $1 \leq i \leq \frac{2n}{3}$, take $\lceil\frac{2n}{3}-i\rceil$ planes each of the form $x=a_i$, $y=b_i$, and $z=c_i$.
        \item\label{type 2} For each integer $1 \leq i \leq \frac{n}{3}$, take $i$ planes each of the form $\frac{x}{a_i}+\frac{y}{b_i}=1$, $\frac{y}{b_i}+\frac{z}{c_i}=1$, and $\frac{z}{c_i}+\frac{x}{a_i}=1$.
        \item\label{type 3} For each integer $\frac{n}{3} < i \leq n$, take $\lceil\frac{n}{3}\rceil$ planes each of the form $\frac{x}{a_i}+\frac{y}{b_i}=1$, $\frac{y}{b_i}+\frac{z}{c_i}=1$, and $\frac{z}{c_i}+\frac{x}{a_i}=1$.
        \item\label{type 4} For each integer $1 \leq i \leq \frac{2n}{3}$, take $\lceil|i-\frac{n}{3}|\rceil$ planes of the form $\frac{x}{a_i}+\frac{y}{b_i}+\frac{z}{c_i}=1$.
        \item\label{type 5} For each integer $ \frac{2n}{3} < i \leq n$, take $\lceil\frac{n}{3}\rceil$ planes of the form $\frac{x}{a_i}+\frac{y}{b_i}+\frac{z}{c_i}=1$.
    \end{enumerate}
    These are $\frac{31}{18}n^2+O(n)$ planes that don't pass through the origin. 

    We will prove that they cover every point except the origin at least $n$ times, and then repeat this construction to get the required upper bound on $\cov{k}({\Gamma})$.  
Let $(a_r, b_s, c_t)$ be any point in the half-grid.  
WLOG assume $r \geq s \geq t$, and then we have the following $9$ cases:  

\begin{enumerate}
    \item $0 < r, s, t \leq \frac{2n}{3}$: Planes of type \ref{type 1} cover it at least $\frac{2n}{3} - r + \frac{2n}{3} - s + \frac{2n}{3} - t \geq n$ times.  
    \item $0 < s, t \leq \frac{2n}{3}$ and $\frac{2n}{3} < r$: In this case, $s + t \leq n - r < \frac{n}{3}$. Planes of type \ref{type 1} cover it at least $\frac{2n}{3} - s + \frac{2n}{3} - t > n$ times.  
    \item $t = 0$, $0 < s \leq \frac{n}{3}$, and $\frac{2n}{3} < r$: In this case, planes of type \ref{type 1} cover it at least $\frac{2n}{3} - s$ times, planes of the form $\frac{y}{b_s} + \frac{z}{c_s} = 1$ from type \ref{type 2} cover it $s$ times, and finally planes of the form $\frac{z}{c_r} + \frac{x}{a_r} = 1$ from type \ref{type 3} cover it at least $\frac{n}{3}$ times, adding up to a total of at least $n$ times.  
    \item $t = 0$, $0 < s \leq \frac{n}{3}$, and $\frac{n}{3} < r \leq \frac{2n}{3}$: In this case, planes of type \ref{type 1} cover it at least $\frac{2n}{3} - r + \frac{2n}{3} - s = \frac{4n}{3} - r - s$ times, planes of the form $\frac{y}{b_s} + \frac{z}{c_s} = 1$ from type \ref{type 2} cover it $s$ times, and finally planes of the form $\frac{z}{c_r} + \frac{x}{a_r} = 1$ from type \ref{type 3} cover it at least $\frac{n}{3}$ times, for a total of at least $\frac{5n}{3} - r \geq n$ times.  
    \item $t = 0$ and $0 < r, s \leq \frac{n}{3}$: In this case, planes of type \ref{type 1} cover it at least $\frac{2n}{3} - r + \frac{2n}{3} - s = \frac{4n}{3} - r - s$ times, and planes of the form $\frac{z}{c_r} + \frac{x}{a_r} = 1$ and $\frac{y}{b_s} + \frac{z}{c_s} = 1$ from type \ref{type 2} cover it $r$ and $s$ times respectively, which gives at least $\frac{4n}{3} > n$ times in total.  
    \item $t = 0$ and $\frac{n}{3} < r, s$: In this case, $r + s \leq n$ implies $r, s < \frac{2n}{3}$. Hence planes of type \ref{type 1} cover it at least $\frac{2n}{3} - r + \frac{2n}{3} - s = \frac{4n}{3} - r - s$ times, and planes of the form $\frac{z}{c_r} + \frac{x}{a_r} = 1$ and $\frac{y}{b_s} + \frac{z}{c_s} = 1$ from type \ref{type 3} cover it at least $\frac{n}{3}$ times each, giving a total of at least $2n - r - s \geq n$ times.  
    \item $t = s = 0$, $0 < r \leq \frac{n}{3}$: In this case, planes of type \ref{type 1} cover it at least $\frac{2n}{3} - r$ times, planes of the form $\frac{z}{c_r} + \frac{x}{a_r} = 1$ and $\frac{x}{a_r} + \frac{y}{b_r} = 1$ from type \ref{type 2} cover it $r$ times each, and planes of type \ref{type 4}, of the form $\frac{x}{a_r}+\frac{y}{b_r}+\frac{z}{c_r}=1$, cover it at least $\frac{n}{3} - r$ times, for a total of at least $n$ times.  
    \item $t = s = 0$, $\frac{n}{3} < r \leq \frac{2n}{3}$: In this case, planes of type \ref{type 1} cover it at least $\frac{2n}{3} - r$ times, planes of the form $\frac{z}{c_r} + \frac{x}{a_r} = 1$ and $\frac{x}{a_r} + \frac{y}{b_r} = 1$ from type \ref{type 3} cover it at least $\frac{n}{3}$ times each, and planes of type \ref{type 4}, of the form $\frac{x}{a_r}+\frac{y}{b_r}+\frac{z}{c_r}=1$, cover it at least $r - \frac{n}{3}$ times, for a total of at least $n$ times.  
    \item $t = s = 0$, $\frac{2n}{3} < r$: In this case, planes of the form $\frac{z}{c_r} + \frac{x}{a_r} = 1$ and $\frac{x}{a_r} + \frac{y}{b_r} = 1$ from type \ref{type 3} cover it at least $\frac{n}{3}$ times each, and planes of type \ref{type 5}, of the form $\frac{x}{a_r}+\frac{y}{b_r}+\frac{z}{c_r}=1$, cover it at least $\frac{n}{3}$ times, for a total of at least $n$ times.
\end{enumerate}

    Now, we can write $k=qn+v$ for some $0 \leq v <n$, and apply the above construction $q$ times. For the remaining $v$, take the planes $x=a_i$, $y=b_i$, and $z=c_i$ each $v$ times for $1 \leq i \leq n$. This uses an additional $3vn=O(n^2)$ planes. Thus the total number of planes used is
    $$\frac{31}{18}n^2q+O(nq)+O(n^2)=\frac{31}{18}nk+O(n^2+k).$$
    Now we show that at least $\frac{31}{18}nk-O(k)=\frac{31}{18}nk-O(n^2+k)$ many planes are required. We again employ the dual LP, where points have to be given weights so that the sum of the weights on any plane is at most $k$, and we have to maximize the sum of the weights; this would give a lower bound for the number of planes required. \\

    Before we start, we note that it is sufficient to prove the bound for $3 \mid n$. This is because the generic half grid contains as its subsets generic half-grids of orders $n-1$ and $n-2$, and the lower bound for them differs from the lower bound for order $n$ by $O(k)$. We also assume $n$ is sufficienlty large, as any small $n$ can be dealt with by making the constant in $O(k)$ larger. \\

    We call the set of points of the form $(a_r,b_s,c_t)$ with $r+s+t=n$ the "front face" of the grid. We call the set of points on the front face with at least one coordinate $0$ the "face-boundary", and all other points on the front face the "interior of the front face". We know describe the weighting. For points on the axes, that is, with only one non-zero coordinate, we give them weight $\frac{k}{3}$. We give all face-boundary points weight $\frac{k}{6}$. 
    For the remaining points, non-zero weights are given only points in the interior of the front face, while all remaining points get weight $0$.
For these points we use a weighting inspired by \cite[Theorem 2.1]{basit2023covering}.
Namely, we give points of the form $(a_r,b_s,c_t)$ in the interior of the front face and $\max\{r,s,t\} \leq \frac{2n}{3}$ a weight of $\frac{3k}{2n(n+3)}(|r-\frac{n}{3}|+|s-\frac{n}{3}|+|t-\frac{n}{3}|)$, and give other points a weight of $0$. 
Note that this function is at most $3k/n$, which is strictly less than $k/6$ for $n$ large enough.

We now show that this weighting works. 
Let $H$ be a plane defined by an equation of the form $u^T x = 1$
Then there are three cases, depending on the size of the support of $u$. \\

\noindent
\textit{Case 1}: $|\mathrm{supp}(u)| = 3$. \\
Then $H$ intersects the grid in at most $3$ points. 
For $n$ large enough, 
the weight of every point is at most $k/3$, and hence the sum of weights is at most $k$. $\blacksquare$ \\

\noindent
\textit{Case 2}: $|\mathrm{supp}(u)| = 2$. \\
WLOG, say $\vec{u} = (u_1, u_2, 0)$ where $u_1u_2 \neq 0$. 
Since the grid is generic, $H$ intersects each $xy$-cross-section of the grid in at most $2$ points. Also, if a point $(a_r,b_s,c_t)$ is on $H$, then so is every point $(a_r,b_s,c_{t'})$ for all $0 \leq t' \leq n$. Thus the intersection of $H$ with the grid forms at most two vertical lines. 
Each of these lines has at most $1$ point on the $xy$ plane and one point on the front face, giving us a total weight of at most $2(k/3 + k/6) = k$. $\blacksquare$
\\

\noindent
\textit{Case 3}: $|\mathrm{supp}(u)| = 1$. \\
WLOG, say $\vec{u}=(u_1,0,0)$. Assuming $H$ contains at least one point from $\Gamma$, $u_1$ must be $\frac{1}{a_r}$ for some $r$. Therefore, the intersection with the top face is precisely the points $(a_r,b_s,c_{n-r-s})$ for $0 \leq s \leq n-r$. We call this set of points a "crooked line". The $s=0$ and $s=n-r$ points correspond to the two face-boundary points, so we focus on the interior first. We will prove that sum of weights assigned to these points is at most $\frac{k}{3}$. To make things easier, we extend the weighting of the interior to the face boundary too (so for case $rst=0$) and prove the bound, which would immediately imply the bound for just the interior points. \\

Let $n=3m$ and write $t=n-r-s$. We define
$$\rho(r,s,t)=\max\!\Big\{\Big|r-\tfrac{n}{3}\Big|,\;\Big|s-\tfrac{n}{3}\Big|,\;\Big|t-\tfrac{n}{3}\Big|\Big\} = \max\{|r-m|,\,|s-m|,\,|t-m|\}.$$
Note that since $r+s+t=n=3m$ on the front face, the three deviations $r-m$, $s-m$, $t-m$ sum to zero. It follows that
$$|r-m|+|s-m|+|t-m|=2\,\max\{|r-m|,\,|s-m|,\,|t-m|\}=2\,\rho(r,s,t),$$
so the max and the sum of absolute deviations from $n/3$ differ only by a factor of $2$.

If $r>2m$, then no point on the crooked
line receives positive weight, so the claim is immediate. Assume
$0\leq r\leq 2m$.

First suppose $0\leq r\leq m$. Set
$p=m-r$ and $u=s-m$. Then
$r-m=-p$ and $ n-r-s-m=p-u$.
The cutoff $\max\{r,s,n-r-s\}\leq 2m$ restricts $u$ to a subinterval of
$-(m-p)\leq u\leq m$.
Thus it is enough to sum over this whole interval. For this interval,
$$\rho(r,s,n-r-s)=\max\{p,|u|,|p-u|\}.$$
Splitting into the ranges
$$-(m-p)\leq u\leq 0,\qquad 1\leq u\leq p,\qquad p+1\leq u\leq m,$$
we get respectively
$$\rho(r,s,n-r-s)=p-u,\qquad \rho(r,s,n-r-s)=p,\qquad \rho(r,s,n-r-s)=u.$$
Therefore
$$\sum_s \rho(r,s,n-r-s) \leq \sum_{u=-(m-p)}^{0}(p-u) + \sum_{u=1}^{p}p + \sum_{u=p+1}^{m}u.$$
Computing these sums gives
$$\sum_s \rho(r,s,n-r-s) \leq (m-p+1)p+\frac{(m-p)(m-p+1)}2 +p^2 +\frac{m(m+1)}2-\frac{p(p+1)}2 = m(m+1).$$

Now suppose $m\leq r\leq 2m$. Set $p=r-m$ and $u=s-m$.
Then $r-m=p$ and  $n-r-s-m=-p-u$. The cutoff restricts $u$ to a subinterval of
$-m\leq u\leq m-p$. Thus it is enough to sum over this whole interval. For this interval,
$$\rho(r,s,n-r-s)=\max\{p,|u|,|p+u|\}.$$
Splitting into the ranges
$$-m\leq u\leq -p,\qquad -p+1\leq u\leq 0,\qquad 1\leq u\leq m-p,$$
we get respectively
$$\rho(r,s,n-r-s)=-u,\qquad \rho(r,s,n-r-s)=p,\qquad \rho(r,s,n-r-s)=p+u.$$
Therefore
$$\sum_s \rho(r,s,n-r-s) \leq \sum_{u=-m}^{-p}(-u) + \sum_{u=-p+1}^{0}p + \sum_{u=1}^{m-p}(p+u).$$
Computing these sums gives
$$\sum_s \rho(r,s,n-r-s) \leq \frac{m(m+1)}2-\frac{p(p-1)}2 +p^2 +p(m-p)+\frac{(m-p)(m-p+1)}2 = m(m+1).$$

Thus, for every fixed $r$,
$$\sum_s \rho(r,s,n-r-s)\leq m(m+1).$$
Consequently, the sum of weights is at most
$$\frac{k}{3m(m+1)} \sum_s \rho(r,s,n-r-s) \leq \frac{k}{3}.$$
Now, $H$ has, apart from the crooked line, two face boundary points and one point $(a_r,0,0)$ on the axis, which have total weight $\frac{k}{6}+2 \cdot\frac{k}{3}=\frac{2k}{3}$, so total sum of weights of points on $H$ is again at most $k$. $\blacksquare$
\\ 
    
Now we count the total weight assigned to all points. The sum of the weights on the axes and face-boundary is $nk+\frac{nk}{2}-O(k)$. We also estimate the total mass assigned by this weighting on the interior
of the top face. Recall that $n=3m$, and we re-use $\rho$ from Case 3 above.
For $1\leq q\leq m$, the set of points on the front face satisfying
\[
r+s+t=3m,\qquad \rho(r,s,t)=q
\]
forms a hexagonal shell of size $6q$. Therefore
\[
\sum_{\substack{r+s+t=3m\\ \rho(r,s,t)\leq m}} \rho(r,s,t)
=
\sum_{q=1}^{m} q\cdot 6q
=
6\sum_{q=1}^{m}q^2
=
m(m+1)(2m+1).
\]
This count includes the boundary points with exactly one coordinate equal to
$0$. Among those, the points satisfying $\rho(r,s,t)\leq m$ are precisely
the points on the three boundary segments $\{r=0 \text{ \ \ and \ \ } m\leq s,t\leq 2m\} $,
and the two analogous segments. Each such boundary segment has $m+1$ points,
and every one of these points has $\rho(r,s,t)=m$. Hence the total
contribution of $\rho$ from these boundary points is $3m(m+1)$.
Thus
\[
\sum_{\substack{r+s+t=3m\\ r,s,t>0\\ \rho(r,s,t)\leq m}}
\rho(r,s,t)
=
m(m+1)(2m+1)-3m(m+1)
=
m(m+1)(2m-2).
\]
Since the weight of each point is
$\frac{k}{3m(m+1)}\rho(r,s,t),$
the total weight assigned to the interior face points is
\[
\frac{k}{3m(m+1)}
\cdot
m(m+1)(2m-2)
=
\frac{k}{3}(2m-2).
\]
Using $n=3m$, this equals $\frac{2nk}{9}-\frac{2k}{3}$.
In particular, the total contribution of the interior front face weighting is $\frac{2nk}{9}-O(k)$. Adding this to the axes and face-boundary gives a total weight of $\frac{31}{18}nk-O(k)$, as required.
\end{proof}

\subsection{Structured grids}
\label{sec:structured}

In this section, we prove Theorem~\ref{thm:structured}, where the missing point of the half-grid does not need to be the vertex of the grid.

\begin{mydef}[Half-rectangular grids]
For positive integers $m \leq n$, an $m \times n$ half-rectangular grid is the set of all integer points $(x,y)$ such that $0 \leq x \leq n-1$, $0 \leq y \leq m-1$, and $(m-1)x+(n-1)y \leq (m-1)(n-1)$.
\end{mydef}

\begin{thm}
    For an $m \times n$ half-rectangular grid $\cH$, and point $P=(x_0,y_0) \in \cH$, $\cov{1}(\cH, P)=n-\lceil \frac{n-m}{m-1} y_0 \rceil -1$.
\end{thm}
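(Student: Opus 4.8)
The plan is to sandwich $\covv_1(\cH,P)$ between matching bounds, using the Alon--Füredi theorem for the lower bound and an explicit three-family construction for the upper bound. Write $P=(x_0,y_0)$ and set
\[
N \;:=\; \Bigl\lfloor (n-1)\tfrac{m-1-y_0}{m-1}\Bigr\rfloor \;=\; (n-1)-\Bigl\lceil \tfrac{(n-1)y_0}{m-1}\Bigr\rceil ,
\]
so that $N$ is the largest first coordinate occurring in row $y_0$ of $\cH$; in particular $0\le x_0\le N$ and $0\le y_0\le m-1$ (both because $P\in\cH$). Since $y_0\in\mathbb Z$ one has $\lceil\frac{(n-1)y_0}{m-1}\rceil-y_0=\lceil\frac{(n-m)y_0}{m-1}\rceil$, hence $N+y_0=n-\lceil\frac{n-m}{m-1}y_0\rceil-1$, so it suffices to prove $\covv_1(\cH,P)=N+y_0$.

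For the lower bound I would take the full axis-parallel subgrid $R:=\{0,1,\dots,N\}\times\{0,1,\dots,y_0\}$. From the defining inequality of the half-rectangular grid and the bound $(m-1)N+(n-1)y_0\le(m-1)(n-1)$ (which follows from $N\le (n-1)\frac{m-1-y_0}{m-1}$), one checks $R\subseteq\cH$, and clearly $P\in R$. Any family of lines that covers $\cH\setminus\{P\}$ while avoiding $P$ then also covers $R\setminus\{P\}$ while avoiding $P$, so by the Alon--Füredi theorem applied to the grid $R$ it uses at least $(|\{0,\dots,N\}|-1)+(|\{0,\dots,y_0\}|-1)=N+y_0$ lines.

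For the upper bound I would exhibit the cover of $\cH\setminus\{P\}$ consisting of the $x_0$ vertical lines $x=0,1,\dots,x_0-1$, the $y_0$ horizontal lines $y=0,1,\dots,y_0-1$, and the $N-x_0$ lines $x+y=s$ for $s=x_0+y_0+1,\dots,N+y_0$. This is $x_0+y_0+(N-x_0)=N+y_0$ lines, and none passes through $P$ (verticals have $x<x_0$, horizontals have $y<y_0$, diagonals have $x+y\neq x_0+y_0$). Coverage: a point with $x<x_0$ lies on a vertical line; a point with $x\ge x_0$, $y<y_0$ lies on a horizontal line; and a point $(x,y)\in\cH$ with $x\ge x_0$, $y\ge y_0$, $(x,y)\ne P$ lies on the diagonal $x+y=x+y$ as soon as $x_0+y_0<x+y\le N+y_0$. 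The left inequality holds because $(x,y)\ne P$; for the right inequality, writing $r_y:=\lfloor (n-1)\frac{m-1-y}{m-1}\rfloor$ for the largest first coordinate in row $y$, one has
\[
x+y\;\le\; r_y+y\;=\;\Bigl\lfloor (n-1)-\tfrac{(n-m)y}{m-1}\Bigr\rfloor ,
\]
and since $m\le n$ the right-hand side is non-increasing in $y$, hence bounded by its value at $y=y_0$, namely $r_{y_0}+y_0=N+y_0$.

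The construction and the subgrid $R$ are the only genuinely creative steps; everything else is bookkeeping. The step I expect to be the real crux is the very last one — that every point of $\cH$ lying in or above the row of $P$ has coordinate sum at most $N+y_0$ — which is precisely the monotonicity of $y\mapsto\lfloor (n-1)-\frac{(n-m)y}{m-1}\rfloor$ and is exactly where the hypothesis $m\le n$ is used in the upper bound. I would also flag the degenerate case $x_0=N$ (then $P$ sits on the ``hypotenuse'' of $\cH$, the diagonal family is empty, and the vertical and horizontal families already cover $\cH\setminus\{P\}$) and note that for $m=n$ the whole formula collapses to $n-1$, with the diagonals being $x+y=x_0+y_0+1,\dots,n-1$.
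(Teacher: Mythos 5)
Your proposal is correct and follows essentially the same route as the paper: the lower bound comes from applying the Alon--F\"uredi grid bound to the same axis-parallel subgrid $\{0,\dots,N\}\times\{0,\dots,y_0\}$ with $P$ in its top row, and the upper bound uses the same three families (verticals $x<x_0$, horizontals $y<y_0$, and diagonals $x+y=s$ for $x_0+y_0<s\le N+y_0$). Your write-up is if anything slightly more careful about the coverage verification (the monotonicity of $y\mapsto\lfloor (n-1)-\frac{(n-m)y}{m-1}\rfloor$ and the degenerate case $x_0=N$), but there is no substantive difference in approach.
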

\begin{proof}
    The idea is to find a large enough grid contained in $\cH$ that contains $P$, and apply the bound of $s - 1 + t - 1$ on grids of size $s \times t$. 
    Indeed, consider the grid $\{0 \leq x \leq n-\lceil \frac{n-1}{m-1}y_0\rceil-1\} \times \{0 \leq y \leq y_0\}$. This is clearly contained in $\cH$ and contains $P$ (in fact in the topmost row). Further, it has dimensions $(n-\lceil \frac{n-1}{m-1}y_0\rceil) \times (y_0+1)$. If we want to cover $\cH \setminus P$ without covering $P$, we have to at least cover the above grid without covering $P$. Therefore by the standard bound on grids, at least $(n-\lceil \frac{n-1}{m-1}y_0\rceil) + (y_0+1)-2=n-\lceil \frac{n-m}{m-1} y_0 \rceil -1$ lines are needed. \\
    
    Now we give the construction. Consider the following family of $n-\lceil \frac{n-m}{m-1} y_0 \rceil -1$ lines:
    \begin{enumerate}
            \item $y_0$ lines of the form   $y=i$ for $0 \leq i \leq y_0-1$.
            \item $x_0$ lines of the form $x=i$ for $0 \leq i \leq x_0-1$.
            \item $n-1-x_0-y_0-\lceil \frac{n-m}{m-1} y_0 \rceil$ lines of the form $x+y=i$ for $n-\lceil \frac{n-m}{m-1} y_0 \rceil-1 \geq i \geq x_0+y_0+1$.
    \end{enumerate}
    Clearly these lines don't contain $(x_0,y_0)$. Furthermore, any point with $y$-coordinate at most $y_0-1$, and any point with $x$-coordinate at most $x_0-1$, are covered by the first two sets of lines. 
    Now consider some other point $(x,y)$ with $x \geq x_0$ and $y \geq y_0$. Then, $$x+y \leq n-1- \frac{n-1}{m-1}y+y = n-1-\frac{n-m}{m-1}y \leq n-1-\frac{n-m}{m-1}y_0$$
    and since $x+y$ is an integer, $x+y \leq n-1-\lceil \frac{n-m}{m-1} y_0 \rceil$. Also, $x+y \geq x_0+y_0$ and equality holds iff $(x,y)=(x_0,y_0)$, so $x+y \geq x_0+y_0+1$ for all other points. Therefore all points in $\cH \setminus P$ are covered.
\end{proof}

\section*{Acknowledgments}
This work was carried out as part of the 2023 Polymath Jr. program, supported by NSF award DMS-2313292.
We thank the organizers and participants of this program. 
We thank Lander Verlinde for helpful comments on our first draft.
We also thank the anonymous referees for a thorough reading of the manuscript and their detailed remarks.

\end{document}